\documentclass[11pt]{amsart}

\usepackage[top=90pt,bottom=90pt,left=90pt,right=90pt]{geometry}

\usepackage[utf8]{inputenc}
\usepackage[T1]{fontenc}
\usepackage{booktabs} % For formal tables
\usepackage{amsmath} % for correct highlighting only, included with acmart
\usepackage{amsthm} % for correct highlighting only, included with acmart
\usepackage{amssymb}
\usepackage[shortlabels]{enumitem}
\usepackage{mathtools}
\usepackage{leftidx}
%\usepackage{cite}
%\usepackage[style=ACM-Reference-Format]{biblatex}
%,backend=biber,maxbibnames=10
%\addbibresource{myref.bib}
%\addbibresource{myref2.bib}
\usepackage{nicefrac}
\usepackage{young}
\usepackage{color}
\usepackage[ruled,lined,linesnumbered]{algorithm2e}
\usepackage{algpseudocode}
\usepackage[bookmarksopen=false,pdftex=true,breaklinks=true,%
      hyperindex=true,pdfstartview=FitH,colorlinks=true,%
      pdfpagelabels=true,colorlinks=true,linkcolor=black,%
      citecolor=black,urlcolor=black,hypertexnames=false%
      ]%
   {hyperref}
\usepackage[nameinlink]{cleveref} % better referencing
\usepackage{tikz}
\usepackage{url}
\usetikzlibrary{shapes.geometric}

\theoremstyle{definition}
\newtheorem{theorem}{Theorem}[section]
\newtheorem{proposition}[theorem]{Proposition}
\newtheorem{corollary}[theorem]{Corollary}
\newtheorem{lemma}[theorem]{Lemma}

\newtheorem{question}[theorem]{Question}

\newtheorem{example}[theorem]{Example}
\newtheorem{remark}[theorem]{Remark}

%Enumeration of equations
\numberwithin{equation}{section}

% Generalized cosines

 %positive
 %negative

 %positive
 %negative

% Fields and rings
\newcommand{\N}{\mathbb{N}}

\newcommand\Z{\mathbb{Z}}
\newcommand\C{\mathbb{C}}

\newcommand\R{\mathbb{R}}

 % field polynomial ring

 % dual

 % Invariant ring

 % dual

 % Invariant ring

% Basics
\newcommand{\nops}[1]{\ensuremath{\vert #1  \vert}}

\newcommand{\norm}[1]{\left\lVert#1\right\rVert} %norm
 %cardinality

 %Coefficient
 %Rank
 %Signature
 %Sign variations in a sequence
 %Trace
\renewcommand{\det}{\ensuremath{\mathrm{Det}}} %Determinant
 %transpose
 %image
 % spaced text in mathmode

% Matrices & linear algebra

% Groups & representation

  %symmetric group
 %equivariant between the two representations

% root systems
\newcommand{\RootA}[1][n-1]{\ensuremath{\mathrm{A}_{#1}}}
\newcommand{\RootB}[1][n]{\ensuremath{\mathrm{B}_{#1}}}
\newcommand{\RootC}[1][n]{\ensuremath{\mathrm{C}_{#1}}}
\newcommand{\RootD}[1][n]{\ensuremath{\mathrm{D}_{#1}}}
\newcommand{\RootE}[1][n]{\ensuremath{\mathrm{E}_{#1}}}
\newcommand{\RootF}[1][n]{\ensuremath{\mathrm{F}_{#1}}}

% Weyl groups
\newcommand{\weyl}{\ensuremath{\mathcal{W}}} %Weyl group

%  positive chamber

%Roots, Beses, etc
\newcommand{\roots}{\ensuremath{\rho}}
\newcommand{\Roots}{\ensuremath{\mathrm{R}}}

\newcommand{\Corootlattice}{\ensuremath{\Lambda}}
\newcommand{\sprod}[1]{\ensuremath{\langle#1\rangle}} % scalar product

\newcommand{\fweight}[1]{\ensuremath{\omega_{#1}}}	%fundamental weights
\newcommand{\weight}{\ensuremath{\mu}} %any other weight
 %a linear functional
\newcommand{\Weights}{\ensuremath{\Omega}} %weightlattice
\newcommand{\CWeights}{\ensuremath{\C[\Weights]}} %weightlattice
\newcommand{\SymWeights}{\ensuremath{\mathrm{Sym}(\Weights)}} %weightlattice

% Orbits sums and the likes
\newcommand{\orb}[1]{\ensuremath{{\theta}_{#1}}} % orbit polynomial
 % big orbit polynomial
 % orbit polynomial
 % character polynomial
\newcommand{\aorb}[1]{\ensuremath{{\Upsilon}_{#1}}}

% Chebyshev polynomials
 %first kind
 %second kind

 %first kind real

\newcommand{\Vor}{\operatorname{Vor}}
 %volume
 %char. function
 %Affine Weyl group
\newcommand{\fundom}{\triangle} %Fundamental Domain
 %Deltoid/cosinoid/domain of orthogonality/WHATEVER
 %hexagon
 %parallelepiped
		%Torus or complex unit circle
	%Torus or complex unit circle with restriction

%\newcommand{\gencos}[1]{\ensuremath{z_{#1}}} %generalized cosines
 %Hilbert map
 %Hilbert map
 %image of Hilbert map %\mathcal{T}
 %image of Hilbert map %\mathcal{T}

 % Euler derivation
\newcommand{\Eulernabla}{\ensuremath{{\nabla}}} % Euler nabla

%Coxeter Digram
\def\row#1/#2!{#1_{\IfStrEq{#2}{}{n-1}{#2}} & \dynkin{#1}{#2}\\}

%Additive and Multiplicative stuff
\newcommand{\fundinv}[1]{\ensuremath{\mathcal{S}_\mathrm{#1}}}
\newcommand{\hilbertideal}[1]{\ensuremath{\mathcal{H}_\mathrm{#1}}}
\newcommand{\augmentationideal}[1]{\ensuremath{\mathcal{I}_\mathrm{#1}}}
\newcommand{\coinvariantbasis}[1]{\ensuremath{\mathcal{B}_\mathrm{#1}}}
\newcommand{\GradedWeights}[1]{\ensuremath{\mathcal{G}^*_\mathrm{#1}(\Weights)}}
\newcommand{\harmonics}[1]{\ensuremath{\mathcal{L}_\mathrm{#1}}}

\newcommand{\cartan}{\ensuremath{\mathfrak{h}}}

\DeclareMathOperator{\spn}{span}

\DeclareMathOperator{\Ime}{Im}

\DeclareMathOperator{\Hom}{Hom}

% code from mathabx.sty and mathabx.dcl
\DeclareFontFamily{U}{mathx}{\hyphenchar\font45}
\DeclareFontShape{U}{mathx}{m}{n}{
      <5> <6> <7> <8> <9> <10>
      <10.95> <12> <14.4> <17.28> <20.74> <24.88>
      mathx10
      }{}
\DeclareSymbolFont{mathx}{U}{mathx}{m}{n}
\DeclareFontSubstitution{U}{mathx}{m}{n}
\DeclareMathAccent{\widecheck}{0}{mathx}{"71}

\begin{document}
\nocite*{}
\setlength{\parskip}{0.5\baselineskip}
\setlength{\parindent}{0pt}

\title[Additive and Multiplicative Coinvariant Spaces of Weyl Groups]{Additive and Multiplicative Coinvariant Spaces of Weyl Groups in the Light of Harmonics and Graded Transfer}

\author{Sebastian Debus}
\address{Technische Universität Chemnitz, Fakultät für Mathematik, 09107 Chemnitz, Germany}
\email{sebastian.debus@mathematik.tu-chemnitz.de}

\author{Tobias Metzlaff}
\address{LAAS CNRS, 31400 Toulouse, France}
\email{math@tobiasmetzlaff.com}

\begin{abstract}
The action of a Weyl group on the associated weight lattice induces 
an additive action on the symmetric algebra and 
a multiplicative action on the group algebra of the lattice. 
We show that 
the coinvariant space of the multiplicative action affords the regular representation and 
is isomorphic to a space of multiplicative harmonics, 
which corresponds to existing results for additive coinvariants of reflection groups. 
We then design an algorithm to compute 
a multiplicative coinvariant basis from an additive one. 
The algorithm preserves isotypic decomposition and graded structure 
and enables the study of multiplicative coinvariants 
by integrating combinatorial knowledge from the additive setting. 
We investigate the Weyl groups of type A and C to find 
new explicit equivariant maps and combinatorial structure. 
\end{abstract}

\maketitle

\thispagestyle{empty}

\section*{Introduction}

The linear action of a finite group $\weyl$ on a free $\Z$-module $\Weights$ induces
an additive action on the symmetric algebra $\SymWeights$ and 
a multiplicative action on the group algebra $\CWeights$. 
Of particular interest among these groups are the Weyl groups $\weyl$ of crystallographic root systems that leave the corresponding weight lattice $\Weights$ stable. 
The present article deals with the space of multiplicative coinvariants $\CWeights_\weyl$. 
For Weyl groups, we show that $\CWeights_\weyl$ affords the regular representation of $\weyl$ 
(\Cref{thm:Regual_representation_multiplicative})
and is isomorphic to a space of multiplicative $\weyl$-harmonics 
(\Cref{thm_multiplicative_harmonics_regular_representation}). 
Subsequently, we present an algorithmic method to transform 
an additive symmetry adapted coinvariant basis for $\SymWeights_\weyl$ 
into a multiplicative one via equivariant graded transfer 
(\Cref{thm_CoinvariantTranserAdditiveMultiplicative}, \Cref{alg_CoinvariantBasisTransfer}).

Weight lattices with Weyl group symmetry appear in the representation theory of semi-simple Lie algebras \cite{bourbaki456,bourbaki78}
and provide optimal configurations for a variety of computational problems, 
such as sampling \cite{kunsch05}, 
interpolation \cite{Xu09,Moody2011,HubertSinger2020}, 
and sphere packing \cite{conway1988a,Viazovska17}. 
Algebraic symmetry reduction techniques simplify computations through the elimination of redundant copies of irreducible $\weyl$-modules, 
see for example \cite{gatermann2004symmetry,mustrou21,faugere23}. 

The coinvariant space $\SymWeights_\weyl$ of the additive action, 
which is the quotient of $\SymWeights$ modulo the ideal generated by the homogeneous invariants of positive degree, is well understood. 
A striking result of Chevalley \cite{chevalley1955} states that $\SymWeights_\weyl$ is isomorphic to the regular representation of $\weyl$. 
As such, the coinvariant space gives meaning to the regular representation as a graded $\weyl$-module. 

Finding a symmetry adapted basis for $\SymWeights_\weyl$, 
which respects the isotypic decomposition of the associated group representation, 
has been studied extensively in algebraic combinatorics. 
For symmetric groups (Weyl groups of type $\RootA$), 
a basis can be constructed from Young tableaux and higher Specht polynomials 
\cite{FultonRepresentationTheory,yamada1993higher}. 
Bitableaux combinatorics can furthermore be used to describe coinvariants of 
hyperoctahedral groups (type $\RootB$ and $\RootC$) and 
certain subgroups of index $2$ (type $\RootD$) \cite{ariki1997higher,morita1998higher}. 
Recently, there has been an interest in finding symmetry adapted bases or so-called higher Specht bases for the coinvariant spaces of diagonal actions of 
the symmetric group and Garsia-Procesi modules \cite{GillespieRhoades2021,gillespie2024higher}. 

This extensive study for additive coinvariants raises questions concerning the second action, 
namely the multiplicative action of the Weyl group $\weyl$ on the group algebra $\CWeights$. 
The invariant ring $\CWeights^\weyl$ is, as an algebra, characterized by theorems of Bourbaki and Farkas \cite{bourbaki456,farkas84}. 
Furthermore, the role of multiplicative invariants in geometry was investigated by Demazure \cite{demazure1974} and in following works \cite{garsia1984,griffeth2006rational,Baek2012basicinvariants,huang2014}. 
However, to the knowledge of the authors, the multiplicative coinvariant space $\CWeights_\weyl$ has by far not received the same amount of attention as the additive one. 

The proof that $\CWeights_\weyl$ affords the regular representation 
(\Cref{thm:Regual_representation_multiplicative}), 
begins with a translation of fundamental invariants for $\CWeights^\weyl$ 
by a generic parameter $c$ that allows us to apply a result from \cite{metzlaff2024} 
and follow similar arguments as in the additive setting. 
Throughout, 
we require Steinberg's result that $\CWeights$ 
is a free $\CWeights^\weyl$-module of rank $\nops{\weyl}$ 
whenever $\weyl$ is a Weyl group and $\Weights$ the weight lattice \cite{Steinberg1975}. 

Our notion of multiplicative $\weyl$-harmonics comes with the definition of derivations on $\CWeights$. 
We show that the resulting Jacobian determinant of fundamental invariants is determined by the Weyl denominator, 
which is based on the partial ordering associated to a root system \cite{bourbaki456} and Weyl's character formula \cite{bourbaki78}, 
again very specific for Weyl groups. 
The subsequent proof that also the space of multiplicative $\weyl$-harmonics affords the regular representation (and is therefore isomorphic to the coinvariant space) follows an iterative argument (\Cref{thm_multiplicative_harmonics_regular_representation}). 

A straightforward computation of a multiplicative coinvariant basis, 
for example, by constructing a Gr{\"o}bner bases 
and deriving the normal set, 
does not necessarily reflect these representation theoretic properties, 
see \Cref{example:C_2 normal set}. 
As described above, the additive setting has been extensively studied, 
but a distinguishing difference is that $\CWeights$ 
does not feature an $\N$-grading by polynomial degree compared to $\SymWeights$. 

For a transfer, 
we therefore start from the work of \cite{Baek2012basicinvariants} 
that considers the associated graded algebras. 
The obtained \Cref{alg_CoinvariantBasisTransfer} is based on an equivariant isomorphism between the graded components (\Cref{prop_GradedEquiv}) and an identification of the coinvariants in $\CWeights$ with a quotient of the associated graded algebra (\Cref{prop_GradedCoinvariantIso}). 
The algorithm preserves isotypic decomposition and graded structure. 

Finally, we conclude with case studies for the Weyl groups associated to the Lie algebras of types $\RootA$ and $\RootC$, which feature explicit equivariant homomorphisms, and compare the obtained bases. 

All vector spaces and algebras are defined over the complex numbers $\C$ unless explicitly stated otherwise. 
The nonnegative integers $\{0,1,2,\ldots\}$ are denoted by $\N$.

\section{Additive and Multiplicative Actions}

Let $\Weights$ be a free $\Z$-module of finite rank $n$ 
and $\weyl$ be a finite subgroup of $\mathrm{GL}(\Weights)$. 

Denote by $V = \Weights\otimes_\Z\C$ the vector space generated by $\Weights$. 
The symmetric algebra $\SymWeights$ is the quotient of the tensor algebra $\C\oplus V\oplus(V \otimes V)\oplus\ldots$ 
modulo the relations $\weight_1 \otimes \ldots \otimes \weight_\ell \equiv \weight_{\sigma(1)} \otimes \ldots \otimes \weight_{\sigma(\ell)}$ for all permutations $\sigma \in \mathfrak{S}_\ell$. 
The \textbf{additive action} of $\weyl$ on $\SymWeights$ is the linear action
\begin{equation}
    \weyl\times \SymWeights\to \SymWeights,\,
    (s,\weight_1 \otimes \ldots \otimes \weight_\ell) \mapsto 
    s\cdot (\weight_1 \otimes \ldots \otimes \weight_\ell) := 
    s(\weight_1) \otimes \ldots \otimes s(\weight_\ell) .
\end{equation}
When a $\Z$-basis $\Weights = \Z \fweight{1} \oplus \ldots \oplus \Z \fweight{n}$ is fixed, 
then $\SymWeights = \C[X_1,\ldots,X_n]$ is a polynomial ring with indeterminates $X_i:=\fweight{i}$. 
In particular, we have $s\cdot X_i = s_{1,i}\,X_1+\ldots+s_{n,i}\,X_n$, where $s_{\ell,i}\in\Z$ are the matrix entries of $s \in \weyl$.

The elements of $\SymWeights$, which are invariant under this action, 
form the \textbf{additive invariant ring} $\SymWeights^\weyl$. 
By Hilbert's finiteness theorem, $\SymWeights^\weyl$ is a finitely generated algebra, that is,
$\SymWeights^\weyl = \C[\fundinv{a}]$ for a minimal set of fundamental invariants $\fundinv{a}$. 
We use the subscript ``$\mathrm{a}$'' to indicate objects in the additive setting.

Denote by $\hilbertideal{a} := (\fundinv{a})$ the 
\textbf{additive Hilbert ideal}, that is, 
the ideal in $\SymWeights$ generated by $\fundinv{a}$. 
The \textbf{additive coinvariant space} is the $\weyl$-module
\begin{equation}
    \SymWeights_\weyl
:=  \SymWeights/\hilbertideal{a}.
\end{equation}
We shall now recall a theorem that tells us more about the properties of the objects defined so far. 
A \textbf{pseudoreflection} $s$ is an automorphism on $V$, such that $\mathrm{codim}(\ker(\mathrm{id}_V-s))=1$, 
that is, the fixed point space has codimension $1$. 
A finite group generated by pseudoreflections is called a \textbf{complex reflection group}. 
%In particular, $s$ has eigenvalue $1$ with multiplicity $n-1$ and one other eigenvalue, 
%which is a nontrivial root of unity. 
%If this last eigenvalue is $-1$, then $s$ is called a \textbf{reflection}. 
%If the finite group $\weyl\subseteq\mathrm{GL}(\Weights)$ is generated by reflections over a real subspace of $V$, 
%then it is the \textbf{Weyl group} of some crystallographic root system with \textbf{weight lattice} $\Weights$, 
%see \cite[Ch.~VI,~\S4]{bourbaki456} and \cite[Ch.~9]{kane13}. 

%This article focuses on Weyl groups, 
%but it should be noted that there are also groups generated by pseudoreflections, 
%which leave complex lattices stable, see \cite{Nebe99}. 

Recall that a \textbf{$\weyl$-module} is a vector space $\tilde{V}$ together with 
a group homomorphism $\rho_{\tilde{V}}: \weyl\to\mathrm{GL}(\tilde{V})$, 
called \textbf{representation}. 
Note that $\weyl$ acts on itself and hence on its group algebra $\C[\weyl]$ 
by left multiplication $\weyl\times \C[\weyl] \to \C[\weyl],\, (s,\sum_t c_t\,t) \mapsto \sum_t c_t\,s t$. 
The corresponding group homomorphism $\weyl \to \mathrm{GL}(\C[\weyl])$ 
is the \textbf{regular representation of $\weyl$}. 
A \textbf{$\weyl$-module homomorphism} or \textbf{equivariant map} between two 
$\weyl$-modules $\tilde{V},\tilde{W}$ is a vector space homomorphisms 
$\phi:\tilde{V} \to \tilde{W}$ with $\phi\circ \rho_{\tilde{V}}(s) = \rho_{\tilde{W}}(s) \circ \phi$ 
for all $s\in\weyl$. 
We say that a $\weyl$-module $\tilde{V}$ \textbf{affords the regular representation} if there is a 
$\weyl$-module isomorphism between $\tilde{V}$ and $\C[\weyl]$. 

\begin{theorem}[Shephard-Todd \cite{shephardtodd54}, Chevalley \cite{chevalley1955}]\label{thm:Shephard-Todd-Chevalley}
Let $\weyl$ be a complex reflection group. 
Then the following statements hold. 
\begin{enumerate}
\item $\SymWeights^\weyl$ is a polynomial algebra with Krull-dimension $n$. 
\item $\SymWeights$ is a free $\SymWeights^\weyl$-module of rank $\nops{\weyl}$. 
\item $\SymWeights_\weyl$ affords the regular representation of $\weyl$. 
\end{enumerate}
\end{theorem}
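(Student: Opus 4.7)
The plan is to establish the three claims in sequence, the first being Chevalley's structure theorem and the latter two being natural consequences; the hypothesis that $\weyl$ is generated by reflections enters decisively only in (1).

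For (1), I would implement Chevalley's classical argument. By Hilbert's finiteness theorem, fix a minimal system of homogeneous generators $f_1, \ldots, f_m$ of $\SymWeights^\weyl$, of degrees $d_1, \ldots, d_m$. The main claim is that these are algebraically independent and that $m = n$. The critical lemma, whose proof genuinely exploits the reflection hypothesis, reads: if $g_1, \ldots, g_m \in \SymWeights^\weyl$ and $h_1, \ldots, h_m \in \SymWeights$ are homogeneous with $\sum_i g_i h_i = 0$ and $g_1 \notin (g_2, \ldots, g_m) \cdot \SymWeights^\weyl$, then $h_1 \in \hilbertideal{a}$. One proves this by induction on $\deg h_1$, using that for any reflection $s$ and any $p \in \SymWeights$ the difference $p - s \cdot p$ is divisible by a linear form $\alpha_s$ cutting out the reflecting hyperplane; applying this together with the Reynolds operator strips one degree at a time from the hypothetical relation. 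Applying the lemma to a minimal-degree algebraic dependence $P(f_1, \ldots, f_m) = 0$ through its partial derivatives yields a contradiction. Finally, integrality of $\SymWeights$ over $\SymWeights^\weyl$, witnessed by the polynomial $\prod_{s \in \weyl}(Y - s \cdot X_i)$ which each $X_i$ annihilates, forces $\SymWeights^\weyl$ to have Krull dimension $n$, whence $m = n$.

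For (2), once $\SymWeights^\weyl = \C[f_1, \ldots, f_n]$ is polynomial, note that $\SymWeights$ itself is Cohen-Macaulay and that $f_1, \ldots, f_n$ form a homogeneous system of parameters (the integrality from (1) makes $\SymWeights/\hilbertideal{a}$ finite-dimensional, so the $f_i$ cut out an Artinian quotient). The standard criterion — a regular sequence of parameters in a Cohen-Macaulay ring — implies that $\SymWeights$ is a free $\SymWeights^\weyl$-module. To compute the rank, localize to fraction fields: the extension $\C(\Weights)/\C(\Weights)^\weyl$ is Galois with group $\weyl$, so its degree equals $|\weyl|$.

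For (3), by Maschke's theorem (we are in characteristic zero), pick a $\weyl$-stable graded vector-space complement $M \subset \SymWeights$ to $\hilbertideal{a}$, giving a $\weyl$-equivariant graded isomorphism $M \cong \SymWeights_\weyl$. Nakayama combined with the freeness from (2) makes the multiplication map $\SymWeights^\weyl \otimes_\C M \to \SymWeights$ an isomorphism of graded $\weyl$-modules (with $\weyl$ acting trivially on the left factor). Passing to equivariant Hilbert series via Molien, one obtains
\[
\chi_{\SymWeights_\weyl}(s; t) = \frac{\prod_{i=1}^n (1 - t^{d_i})}{\mathrm{det}_V(1 - t s)}.
\]
Letting $t \to 1$, the numerator vanishes to order exactly $n$, while $\mathrm{det}_V(1 - ts)$ vanishes to order $\dim \ker(1 - s) \leq n$, with equality only for $s = e$. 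Hence $\chi_{\SymWeights_\weyl}(s) = 0$ for $s \neq e$ and $\chi_{\SymWeights_\weyl}(e) = \prod_i d_i$. The latter product equals $|\weyl|$ by evaluating the ungraded Hilbert series $H_{\SymWeights_\weyl}(t) = \prod_i (1-t^{d_i})/(1-t)$ at $t=1$ and comparing against the free-module rank from (2). This is exactly the character of the regular representation of $\weyl$.

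The main obstacle is the inductive lemma in (1), where the reflection hypothesis must be unpacked carefully through the divisibility of $p - s \cdot p$ by the hyperplane equation $\alpha_s$; once polynomiality of $\SymWeights^\weyl$ is in hand, parts (2) and (3) reduce to Cohen-Macaulay commutative algebra and a Molien-series limit computation.
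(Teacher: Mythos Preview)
The paper does not prove this theorem: it is stated with attribution to Shephard--Todd and Chevalley and used as a classical input, with no argument supplied. So there is no in-paper proof to compare against.

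That said, your sketch is a faithful outline of the classical route. The Chevalley lemma in (1) is stated correctly and you have identified where the reflection hypothesis actually bites (the divisibility $p - s\cdot p \in (\alpha_s)$). The Cohen--Macaulay/regular-sequence argument for (2) and the Galois-theoretic rank count are standard and sound. For (3), the Molien limit computation is correct; one small point worth tightening is the equality $\prod_i d_i = |\weyl|$: you derive it by comparing $H_{\SymWeights_\weyl}(1)$ with the rank from (2), which is fine, but in the write-up you should make explicit that $\dim_\C \SymWeights_\weyl$ equals the free-module rank (this is immediate from the tensor decomposition $\SymWeights \cong \SymWeights^\weyl \otimes_\C M$ you already set up). Nothing here is wrong; it is simply more detail than the paper itself provides, since the paper defers entirely to the cited references.
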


Thanks to statement (1) we can consider the Jacobian determinant $D$ of the map $\C^n \to \C^n$ defined by a set of (homogeneous) fundamental invariants $\fundinv{a}$ with cardinality $n$. 
The vector space $\harmonics{a}$ generated by the partial derivatives of the Jacobian determinant
$\{\partial_{i_1}\ldots\partial_{i_\ell} D\,\vert\,\ell\in\N,\,1\leq i_1\leq\ldots\leq i_\ell \leq n\}$ 
is called the space of \textbf{additive $\weyl$-harmonics}.

\begin{theorem}[Steinberg \cite{SteinbergHarmonicPolynomials}]\label{thm_additive_harmonics}
Let $\weyl$ be a complex reflection group. 
Then $\SymWeights_\weyl$ is isomorphic as a $\weyl$-module to $\harmonics{a}$. 
\end{theorem}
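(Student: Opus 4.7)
My plan is to identify $\harmonics{a}$ with the apolar (Fischer) complement of the Hilbert ideal $\hilbertideal{a}$, and then transfer the resulting direct-sum decomposition to the coinvariant quotient.

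First, I fix a $\weyl$-invariant positive-definite inner product on $V$ (which exists since $\weyl$ is finite, and naturally comes from the Killing form of the root system). Choosing an orthonormal basis $\fweight{1},\ldots,\fweight{n}$ identifies each $s\in\weyl$ with an orthogonal matrix, so that $\partial_i$ transforms contragradiently to $X_i$. For $f=\sum_\alpha c_\alpha X^\alpha$ let $f(\partial)=\sum_\alpha c_\alpha \partial^\alpha$ and define the apolar pairing
\[
\langle f,g\rangle := (f(\partial)\,g)(0).
\]
This pairing is symmetric, bilinear, vanishes off the diagonal with respect to the grading, is nondegenerate on each graded piece, and is $\weyl$-equivariant, i.e.\ $\langle s\cdot f,s\cdot g\rangle=\langle f,g\rangle$.

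Second, I would show $\harmonics{a}\subseteq\hilbertideal{a}^{\perp}$. Since $\fundinv{a}$ generates $\hilbertideal{a}$, a polynomial $h$ lies in $\hilbertideal{a}^{\perp}$ iff $p(\partial)h=0$ for every $p\in\fundinv{a}$. The Jacobian $D=\det(\partial_i\fundinv{a}_j)$ is antiinvariant ($s\cdot D=\det(s)D$) and, by a classical length argument, has minimal degree among nonzero antiinvariants (equal to the number of reflections in $\weyl$). For any $p\in\fundinv{a}$ the element $p(\partial)D$ is again antiinvariant but of strictly smaller degree, hence $p(\partial)D=0$, so $D\in\hilbertideal{a}^{\perp}$. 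Since differentiation $\partial_i$ commutes with $p(\partial)$, the space $\hilbertideal{a}^{\perp}$ is closed under partial derivatives, and therefore contains every iterated derivative of $D$, proving the inclusion.

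Third, for the reverse inclusion I would invoke \Cref{thm:Shephard-Todd-Chevalley}(2): since $\SymWeights$ is a free $\SymWeights^\weyl$-module of rank $\nops{\weyl}$, the Hilbert series of both $\hilbertideal{a}^{\perp}$ and $\SymWeights_\weyl$ equals $\prod_{i=1}^n (1-t^{d_i})/(1-t)$ with $d_i=\deg\fundinv{a}_i$. The same generating function enumerates the iterated derivatives of $D$ once duplicates from $\hilbertideal{a}$-relations are removed; matching leading terms on each graded piece forces $\dim\harmonics{a}=\nops{\weyl}=\dim\hilbertideal{a}^{\perp}$, hence equality. Nondegeneracy of the apolar pairing on each graded component then yields the $\weyl$-module decomposition
\[
\SymWeights=\hilbertideal{a}\oplus\harmonics{a},
\]
and the projection $\SymWeights\twoheadrightarrow\SymWeights_\weyl$ restricts to a $\weyl$-equivariant isomorphism $\harmonics{a}\xrightarrow{\sim}\SymWeights_\weyl$.

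The main obstacle is the dimension/Hilbert-series matching in the third paragraph: establishing that the span of the iterated derivatives of the single element $D$ already exhausts the full apolar complement. A clean way to handle this is to show that the map $\SymWeights^\weyl\otimes\harmonics{a}\to\SymWeights$, $p\otimes h\mapsto p\,h$, is an isomorphism of graded $\weyl$-modules, which, combined with \Cref{thm:Shephard-Todd-Chevalley}, pins down $\dim\harmonics{a}$ exactly; alternatively, one can invoke Steinberg's original description of $\harmonics{a}$ as the image of $\SymWeights^\weyl\cdot D$ under the ``polarization'' operator $\sum_i X_i\partial_i^\vee$ and count directly.
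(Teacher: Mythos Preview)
The paper does not supply a proof of \Cref{thm_additive_harmonics}: it is stated as a classical result attributed to Steinberg \cite{SteinbergHarmonicPolynomials} and immediately followed by the discussion of the multiplicative action, with no argument given. So there is no ``paper's own proof'' to compare against; your proposal is being measured against the literature rather than against this article.

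That said, your outline is the standard apolar--pairing approach (as in Steinberg, Chevalley, or the textbook treatments of Kane and Humphreys), and the first two steps are correct. The genuine gap is exactly the one you flag yourself in the third paragraph: you have $\harmonics{a}\subseteq\hilbertideal{a}^{\perp}$ and $\dim\hilbertideal{a}^{\perp}=\nops{\weyl}$, but the sentence ``the same generating function enumerates the iterated derivatives of $D$ once duplicates from $\hilbertideal{a}$-relations are removed'' is not an argument---it presupposes what you want to prove. The clean missing lemma is that the annihilator $\{f\in\SymWeights : f(\partial)D=0\}$ coincides with $\hilbertideal{a}$; equivalently, the pairing $\SymWeights_\weyl\times\harmonics{a}\to\C$, $(f,h)\mapsto (f(\partial)h)(0)$, is perfect. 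Once that is established, the map $f\mapsto f(\partial)D$ factors through $\SymWeights_\weyl$ with image exactly $\harmonics{a}$, giving both the dimension count and the $\weyl$-isomorphism in one stroke. Your two suggested workarounds at the end gesture at this but do not carry it out.
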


We now introduce the second action. 
The group algebra $\CWeights$ is the commutative algebra with vector space basis 
$\mathfrak{e}^\Weights:=\{\mathfrak{e}^\weight\,\vert\,\weight\in\Weights\}$ and multiplication $\mathfrak{e}^\weight\,\mathfrak{e}^\nu = \mathfrak{e}^{\weight+\nu}$. 
The \textbf{multiplicative action} of $\weyl$ on $\CWeights$ is the linear action
\begin{equation}
    \weyl\times\CWeights\to\CWeights,\,
    (s,\mathfrak{e}^\weight) \mapsto 
    s \cdot \mathfrak{e}^\weight := 
    \mathfrak{e}^{s(\weight)}.
\end{equation}
When a $\Z$-basis $\Weights = \Z \fweight{1} \oplus \ldots \oplus \Z \fweight{n}$ is fixed, 
then $\CWeights = \C[x_1^{\pm 1},\ldots,x_n^{\pm 1}]$ is a Laurent polynomial ring 
with indeterminates $x_i:=\mathfrak{e}^{\fweight{i}}$. 
In particular, we have $s\cdot x_i = x_1^{s_{1,i}}\cdots x_n^{s_{n,i}}$. 

The elements of $\CWeights$, which are invariant under this action, 
form the \textbf{multiplicative invariant ring} $\CWeights^\weyl$. 
This is a finitely generated algebra, that is, $\CWeights^\weyl = \C[\fundinv{m}]$ 
for a minimal set of fundamental invariants $\fundinv{m}$ \cite{lorenz06}. 
We use the subscript ``$\mathrm{m}$'' to indicate objects in the multiplicative setting. 

The notion ``multiplicative'' refers to the fact that $\mathfrak{e}^\Weights$ is a multiplicative group in $\CWeights$, 
where the inverse of $\mathfrak{e}^\weight$ is $\mathfrak{e}^{-\weight}$. 
On the other hand, $V=\spn_\C\Weights$ is an additive group in $\SymWeights$. 

Denote by $\cartan=\Weights\otimes_\Z\R$ the real vector space generated by $\Weights$. 
A \textbf{crystallographic root system} $\Roots$ in $\cartan$ is a subset of $\cartan$, such that
\begin{enumerate}
\item[(R1)] $\Roots$ is finite, does not contain $0$ and generates $\cartan$ as an $\R$-vector space;
\item[(R2)] For every \textbf{root} $\roots\in\Roots$, 
there exists a \textbf{coroot} $\roots^\vee$ inside the dual space $\cartan^\vee$, 
such that $\roots^\vee(\roots)=2$ and the reflection 
$s_\roots:\cartan\to\cartan,\,\weight\mapsto \weight-\roots^\vee(\weight)\,\roots$ leaves $\Roots$ stable;
\item[(R3)] For all $\roots,\tilde{\roots}\in\Roots$, we have $\roots^\vee(\tilde{\roots})\in\Z$. 
\end{enumerate}
The \textbf{weight lattice} of $\Roots$ is the set of all $\weight\in\cartan$, 
such that, for all $\roots\in\Roots$, we have $\roots^\vee(\weight)\in\Z$. 
Furthermore, the \textbf{Weyl group} of $\Roots$ is the group generated by all the reflections $s_\roots$ and leaves the weight lattice stable. 

%The following theorem exists, to the knowledge of the authors, 
%only for Weyl groups acting on weight lattices. 
%By \cite[Ch.~9]{kane13}, a real reflection group\footnote{A real reflection group is a finite group generated by pseudoreflections on a real vector space $\cartan$ with eigenvalues $\pm 1$.}
%that leaves a full-dimensional lattice stable 
%is already the Weyl group of a root system with respective weight lattice. 
%Note that there are also lattices associated to complex root systems \cite{Nebe99}, 
%but we are not aware of a generalization of the following theorem in this context. 

%only for a small subclass of groups generated by pseudoreflections, 
%see also \cite{Nebe99} and \cite[Ch.~9]{kane13}. 

\begin{theorem}[Bourbaki \cite{bourbaki456}, Steinberg \cite{Steinberg1975}, Farkas \cite{farkas84}]\label{thm:Bourbaki}
Let $\weyl$ be the Weyl group of a crystallographic root system with weight lattice $\Weights$. 
Then the following statements hold. 
\begin{enumerate}
\item $\CWeights^\weyl$ is a polynomial algebra with Krull dimension $n$. 
\item $\CWeights$ is a free $\CWeights^\weyl$-module of rank $\nops{\weyl}$. 
\end{enumerate}
\end{theorem}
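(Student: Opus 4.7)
For part (1), the plan is to exhibit fundamental invariants via the orbit sum construction. Set $\orb{\lambda} := \sum_{\nu\in\weyl\cdot\lambda}\mathfrak{e}^\nu$ for each $\lambda\in\Weights$. Averaging over $\weyl$ is a $\C$-linear projection of $\CWeights$ onto $\CWeights^\weyl$, so $\{\orb{\lambda}\}$, as $\lambda$ ranges over the dominant weights (the unique $\weyl$-orbit representatives, i.e.\ $\lambda=\sum a_i\fweight{i}$ with $a_i\in\N$), forms a $\C$-basis of $\CWeights^\weyl$. Set $y_i := \orb{\fweight{i}}$. To show $\CWeights^\weyl = \C[y_1,\ldots,y_n]$, I would induct on the dominance order $\preceq$ on dominant weights (where $\mu\preceq\lambda$ iff $\lambda-\mu$ is an $\N$-combination of simple roots). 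For $\lambda = \sum a_i\fweight{i}$, the product $y_1^{a_1}\cdots y_n^{a_n}$ expands as $\orb{\lambda}$ plus an $\N$-linear combination of $\orb{\mu}$ with $\mu\prec\lambda$: the top term arises because selecting $\fweight{i}$ in each factor is the only way to reach $\lambda$, while any other selection yields a strictly dominated weight. Since $0$ is minimal and only finitely many dominant weights lie below any fixed $\lambda$, induction expresses every $\orb{\lambda}$, hence every invariant, as a polynomial in $y_1,\ldots,y_n$.

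The algebraic independence of $y_1,\ldots,y_n$ then follows from a dimension count: the inclusion $\CWeights^\weyl\hookrightarrow\CWeights$ is integral (standard for finite group actions, via $\prod_{\sigma\in\weyl}(T-\sigma(x))$), so both rings share Krull dimension $n$, and a size-$n$ generating set of an $n$-dimensional finitely generated algebra is necessarily algebraically independent. For (2), I would invoke miracle flatness. The Laurent ring $\CWeights$ is regular, hence Cohen--Macaulay, of dimension $n$, and by (1) so is $\CWeights^\weyl$. A finite extension with regular base and Cohen--Macaulay top of the same Krull dimension is flat, so $\CWeights$ is a finitely generated flat, hence projective, $\CWeights^\weyl$-module; Quillen--Suslin then promotes projective over the polynomial ring $\CWeights^\weyl$ to free. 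The rank equals $\nops{\weyl}$ because $\weyl$ acts faithfully on $\mathrm{Frac}(\CWeights)$, making $\mathrm{Frac}(\CWeights)/\mathrm{Frac}(\CWeights^\weyl)$ a Galois extension of degree $\nops{\weyl}$.

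The main obstacle lies squarely in part (2). The miracle-flatness route above is clean but uses Quillen--Suslin as a hammer; a more constructive Chevalley-style alternative would introduce the multiplicative augmentation ideal $(y_1 - \nops{\weyl\cdot\fweight{1}},\ldots,y_n - \nops{\weyl\cdot\fweight{n}})\,\CWeights$, show that the quotient is $\nops{\weyl}$-dimensional over $\C$, and lift a basis by Nakayama to a free $\CWeights^\weyl$-basis of $\CWeights$. The crux is precisely this dimension computation: unlike the additive setting of Theorem~\ref{thm:Shephard-Todd-Chevalley}, the natural $\N$-filtration on $\CWeights$ is not preserved by $\weyl$, so one cannot simply pass to the associated graded and quote Shephard--Todd--Chevalley. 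Reconciling these two sides — and thereby replacing Quillen--Suslin by an explicit transfer — is precisely the type of question addressed by the graded machinery developed later in the paper.
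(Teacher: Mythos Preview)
The paper does not supply its own proof of this theorem; it is stated as a classical result attributed to Bourbaki, Steinberg, and Farkas, with no proof environment following the statement. There is therefore nothing in the paper to compare your proposal against.

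That said, your sketch is essentially correct and follows the classical route. Part~(1) is exactly Bourbaki's argument: triangularity of the change of basis from $\{\orb{\lambda}\}_{\lambda\text{ dominant}}$ to $\{y_1^{a_1}\cdots y_n^{a_n}\}$ under the dominance order, together with the Krull-dimension count (using that $\CWeights^\weyl$ is a domain) for algebraic independence. Part~(2) via miracle flatness and Quillen--Suslin is a valid modern shortcut; Steinberg's original 1975 argument (the Pittie--Steinberg theorem) is more constructive, producing an explicit basis indexed by $\weyl$, but your approach is sound. Your self-diagnosed ``obstacle'' --- replacing Quillen--Suslin by a direct computation of $\dim_\C \CWeights/\hilbertideal{m}$ --- is an obstacle to constructivity, not to correctness; indeed the paper later obtains $\dim_\C \CWeights/\hilbertideal{m} = \nops{\weyl}$ in \Cref{lem: quotient size} precisely by \emph{invoking} part~(2) of this theorem, rather than the other way around.
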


Note that statements {(1)} and {(2)} correspond to those in \Cref{thm:Shephard-Todd-Chevalley}. 
The hypothesis however is more restrictive, 
since Weyl groups form a proper subclass of the complex reflection groups, see \cite{shephardtodd54}. 
Note that there are also lattices associated to complex root systems \cite{Nebe99}, 
but the authors are not aware of a generalization of \Cref{thm:Bourbaki} in this context. 

\begin{example}\label{example_c2_fundamental_invariants}
The lattice $\Weights=\Z^2$ is the weight lattice of the root system $\RootC[2]$ in $\cartan=\R^2$ with 
fundamental weights $\fweight{1}=e_1,\fweight{2}=e_1+e_2$ and 
Weyl group $\weyl \cong \mathfrak{S}_2 \wr \{\pm 1\}$ of order $8$, 
see  {\cite[Pl. III]{bourbaki456}}. 
We choose between two $\Z$-bases for the weight lattice, namely $\Weights=\Z\,e_1\oplus\Z\,e_2=\Z\fweight{1}\oplus\Z\fweight{2}$, to emphasize that the choice matters for computations. 

In the standard basis $e_1,e_2$, the Weyl group is generated by 
\[
    \begin{pmatrix} 0&1\\1&0 \end{pmatrix}
    \quad \mbox{and} \quad 
    \begin{pmatrix} 1&0\\0&-1 \end{pmatrix} 
\]
with an additive action on $\SymWeights=\C[Y_1,Y_2]$ and a multiplicative action on $\CWeights=\C[y_1^{\pm 1},y_2^{\pm 1}]$. 
The fundamental invariants are $Y_1^2+Y_2^2, Y_1^2\,Y_2^2$ and 
$y_1+y_1^{-1}+y_2+y_2^{-1}, (y_1+y_1^{-1})\,(y_2+y_2^{-1})$ respectively. 

On the other hand, in the basis of fundamental weights $\fweight{1},\fweight{2}$, 
the Weyl group is generated by 
\[
    \begin{pmatrix} -1&0\\1&1 \end{pmatrix}
    \quad \mbox{and} \quad 
    \begin{pmatrix} 1&2\\0&-1 \end{pmatrix}
\]
with an additive action on $\SymWeights=\C[X_1,X_2]$ and a multiplicative action on $\CWeights=\C[x_1^{\pm 1},x_2^{\pm 1}]$. 
We have $X_1=Y_1, X_2=Y_1+Y_2$ and $x_1=y_1, x_2=y_1\,y_2$. 
In particular, the fundamental invariants are $2\,X_1^2-2\,X_1\,X_2+X_2^2, X_1^4-2\,X_1^3\,X_2+X_1^2\,X_2^2$ and 
$x_1+x_1^{-1}+x_1^{-1}x_2+x_1x_2^{-1}, x_2+x_2^{-1}+x_1^2x_2^{-1}+x_1^{-2}x_2$ respectively. 
\end{example}

As a convention, we always use the variables $X_i$ for $\SymWeights$ and $x_i^{\pm 1}$ for $\CWeights$, 
if the basis for $\Weights$ is that of fundamental weights of a crystallographic root system, 
while we use $Y_i$ and $y_i^{\pm 1}$ for %other bases such as 
the standard basis as in \Cref{example_c2_fundamental_invariants}.

\section{Multiplicative Coinvariant Spaces} 

Let $\weyl$ be the Weyl group of a crystallographic root system with weight lattice $\Weights$ and $\fundinv{m} = \{\theta_1,\ldots,\theta_n\}$ 
be a set of fundamental invariants for the multiplicative invariant ring $\CWeights^\weyl$. 
Denote by $\hilbertideal{m} := (\fundinv{m})$ the 
\textbf{multiplicative Hilbert ideal}, that is, 
the ideal in $\CWeights$ generated by $\fundinv{m}$. 
The \textbf{multiplicative coinvariant space} is the $\weyl$-module
\begin{equation}
    \CWeights_\weyl
:=  \CWeights/\hilbertideal{m}.
\end{equation}
In this section, we give a geometric proof that statement {(3)} in \Cref{thm:Shephard-Todd-Chevalley} 
also holds for multiplicative invariants, 
that is, $\CWeights_\weyl$ affords the regular representation. 

\begin{lemma} \label{lem:affine translation of invariant ring}
For $c=(c_1,\ldots,c_n) \in \C^n$, 
the set $\fundinv{m} - c := \{\theta_1 - c_1 , \ldots , \theta_n - c_n\}$  
is also a set of fundamental invariants for $\CWeights^\weyl$.
\end{lemma}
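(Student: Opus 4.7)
The plan is to verify the three defining properties of a set of fundamental invariants: invariance under $\weyl$, generation of the invariant ring, and minimality.

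First, I would check invariance. Since each $c_i \in \C$ is a scalar and hence fixed by $\weyl$ under the multiplicative action, and since $\theta_i \in \CWeights^\weyl$ by assumption, the difference $\theta_i - c_i$ lies in $\CWeights^\weyl$ for every $i$.

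Next, I would show that $\mathcal{S}_c$ generates $\CWeights^\weyl$ as a $\C$-algebra. The containment $\C[\mathcal{S}_c] \subseteq \CWeights^\weyl$ is immediate from invariance. For the reverse, the identity $\theta_i = (\theta_i - c_i) + c_i$ shows that each $\theta_i$ is a polynomial in $\mathcal{S}_c$ with constant term $c_i$, hence $\C[\theta_1,\ldots,\theta_n] \subseteq \C[\mathcal{S}_c]$. Since $\fundinv{m}$ is a set of fundamental invariants, this gives $\CWeights^\weyl = \C[\theta_1,\ldots,\theta_n] = \C[\mathcal{S}_c]$.

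Finally, I would argue minimality by invoking Bourbaki's theorem (Theorem stated in the excerpt): $\CWeights^\weyl$ is a polynomial algebra of Krull dimension $n$, so any generating set contains at least $n$ elements, and a minimal one has exactly $n$. Concretely, the $\C$-algebra endomorphism of $\C[T_1,\ldots,T_n]$ defined by $T_i \mapsto T_i + c_i$ is an automorphism (with inverse $T_i \mapsto T_i - c_i$), so it carries the algebraic independence of the images of $\theta_i$ to that of the images of $\theta_i - c_i$. Hence $\mathcal{S}_c$ consists of $n$ algebraically independent generators of $\CWeights^\weyl$, and is therefore a set of fundamental invariants. There is no serious obstacle here; the statement is essentially the observation that an affine translation by a scalar vector is an automorphism of a polynomial ring, and it is recorded as a lemma for repeated use in the sequel.
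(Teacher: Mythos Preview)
Your proof is correct and follows essentially the same approach as the paper: the key step in both is the identity $\theta_i = (\theta_i - c_i) + c_i$, which yields $\C[\mathcal{S}_c] = \C[\fundinv{m}] = \CWeights^\weyl$. The paper's proof is a one-liner recording only this generation argument, whereas you additionally spell out invariance and minimality (via Krull dimension and the translation automorphism of the polynomial ring); these are implicit in the paper but your explicit treatment is a harmless and arguably cleaner elaboration.
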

\begin{proof}
We have $\orb{i} - c_i = 1\cdot \orb{i} -c_i\cdot 1 \in \C[\fundinv{m}] = \CWeights^\weyl$ 
and, conversely, $\orb{i} =1\cdot (\orb{i} - c_i) + c_i \cdot 1 \in \C[\fundinv{m} - c]$. 
\end{proof}

Fix a monomial ordering on $\mathfrak{e}^\Weights$ and let $I$ be any ideal in $\CWeights$ with Gr\"{o}bner basis $G$ 
(in practice, one defines $I$ in $\C[y_1,\ldots,y_n,\tilde{y}_1,\ldots,\tilde{y}_n]$ and computes a Gr\"{o}bner basis for $I+(y_i\tilde{y}_i-1)_{1\leq i\leq n}$ with respect to an ordering $y_i\succ \tilde{y}_i$). 
A normal set for $\CWeights / I$ is a finite set of normal forms in $\CWeights$ with respect to $G$, 
such that their equivalence classes form a vector space basis for $\CWeights / I$. 
We do not assume that a normal set consists only of monomials. 

\begin{lemma} \label{lem: quotient size}
For $c \in \C^n$, 
the vector space dimension of $\CWeights/(\fundinv{m} - c)$ is $|\weyl|$ and, 
if $\{h_1,\ldots,h_{|\weyl|}\} \in \CWeights$ is a normal set for 
$\CWeights_\weyl = \CWeights / \hilbertideal{m}$, 
then it is also a normal set for $\CWeights / (\fundinv{m} - c)$ 
and vice versa. 
\end{lemma}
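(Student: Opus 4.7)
The lemma splits naturally into two assertions: (a) $\dim_\C \CWeights/\mathcal{H}_c = |\weyl|$ for every $c \in \C^n$, and (b) the normal sets of $\CWeights/\mathcal{H}_0$ and $\CWeights/\mathcal{H}_c$ coincide. My plan is to deduce (a) from Theorem 1.3(2) via Lemma 2.1, and then to reduce (b) to the single statement $\mathrm{in}(\mathcal{H}_0) = \mathrm{in}(\mathcal{H}_c)$ of leading ideals in the fixed global ordering.

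For (a), Lemma 2.1 identifies $\mathcal{S}_c$ as a system of fundamental invariants, so $\mathfrak{m}_c := (\mathcal{S}_c) \subseteq \CWeights^\weyl$ is the vanishing ideal at $(c_1,\ldots,c_n) \in \mathrm{Spec}\,\CWeights^\weyl \cong \C^n$; in particular $\mathfrak{m}_c$ is maximal with $\CWeights^\weyl/\mathfrak{m}_c \cong \C$. Writing $\mathcal{H}_c = \mathfrak{m}_c\,\CWeights$, the Bourbaki--Steinberg--Farkas freeness (Theorem 1.3(2)) gives
\[
\CWeights/\mathcal{H}_c \;\cong\; \CWeights \otimes_{\CWeights^\weyl} \CWeights^\weyl/\mathfrak{m}_c \;\cong\; \C^{|\weyl|}.
\]

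For (b), once $\mathrm{in}(\mathcal{H}_0) = \mathrm{in}(\mathcal{H}_c)$ is established, the $\C$-subspace $N \subseteq \CWeights$ spanned by the monomials outside this common leading ideal is the universal normal-form space for either ideal. Reduction identifies $N$ isomorphically with both quotients, so a normal set --- equivalently, a $\C$-basis of $N$ --- serves both $\mathcal{H}_0$ and $\mathcal{H}_c$ simultaneously. To prove the leading-ideal equality, I would take any $f = \sum_i p_i\,\theta_i \in \mathcal{H}_0$ and form its shift $f' := \sum_i p_i\,(\theta_i - c_i) = f - \sum_i c_i p_i \in \mathcal{H}_c$. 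In a global ordering with $\mathrm{in}(\theta_i) \succ 1$, one has $\mathrm{in}(p_i)\,\mathrm{in}(\theta_i) \succ \mathrm{in}(p_i)$, so the perturbation $\sum_i c_i p_i$ has leading monomial at most $\max_i \mathrm{in}(p_i) \prec \max_i \mathrm{in}(p_i)\,\mathrm{in}(\theta_i)$. Whenever the chosen representation of $f$ is cancellation-free so that $\mathrm{in}(f) = \max_i \mathrm{in}(p_i)\,\mathrm{in}(\theta_i)$, this yields $\mathrm{in}(f') = \mathrm{in}(f)$, giving $\mathrm{in}(\mathcal{H}_0) \subseteq \mathrm{in}(\mathcal{H}_c)$; the reverse inclusion follows by the analogous shift from $\mathcal{H}_c$ back to $\mathcal{H}_0$.

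The main obstacle I anticipate is that not every $f \in \mathcal{H}_0$ admits a cancellation-free representation on the generators $\theta_i$. I would handle this by restricting the shift argument to a strong Gr\"obner basis of $\mathcal{H}_0$ (whose elements admit standard representations for their own leading monomials), or, more efficiently, by invoking (a): both $\mathrm{in}(\mathcal{H}_0)$ and $\mathrm{in}(\mathcal{H}_c)$ are monomial ideals of equal $\C$-codimension $|\weyl|$, so a single one of the two inclusions is already enough to force equality.
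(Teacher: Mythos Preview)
Your argument for (a) is correct and is essentially the paper's argument rewritten in tensor language.

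Part (b), however, has a genuine gap that neither of your proposed fixes closes. The shift $f = \sum_i p_i\theta_i \mapsto f' = f - \sum_i c_i p_i$ depends on the \emph{representation} of $f$, and when that representation has cancellation among the leading terms of the $p_i\theta_i$, you lose all control over $\mathrm{in}(f')$: it can be strictly larger than $\mathrm{in}(f)$ (coming from a surviving $p_i$), so you have not produced an element of $\mathcal{H}_c$ with leading monomial $\mathrm{in}(f)$. Your first fix (``restrict to a strong Gr\"obner basis of $\mathcal{H}_0$'') does not help, because a Gr\"obner basis element $g$ admits a standard representation with respect to the Gr\"obner basis itself, not with respect to the original generators $\theta_1,\ldots,\theta_n$; having $\mathrm{in}(p_i\theta_i)\preceq \mathrm{in}(g)$ for all $i$ is exactly the statement that the $\theta_i$ already form a Gr\"obner basis, which there is no reason to assume. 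Your second fix (``codimension forces equality from one inclusion'') is then moot, since you have not established either inclusion. A further issue you do not address is what ``leading ideal'' even means in $\CWeights$: every monomial $\mathfrak{e}^\mu$ is a unit, so the naive monomial ideal generated by leading terms is the whole ring. One has to pass to something like $\C[z,w]/(z_iw_i-1)$ or use a flat-family/semicontinuity argument to make $\mathrm{in}(\mathcal{H}_c)$ meaningful and constant in $c$; none of this is in your sketch.

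The paper sidesteps the entire Gr\"obner discussion. It takes a free $\CWeights^\weyl$-module basis $h_1,\ldots,h_{|\weyl|}$ of $\CWeights$ (Theorem~\ref{thm:Bourbaki}(2)), observes via Lemma~\ref{lem:affine translation of invariant ring} that $\CWeights^\weyl=\C[\theta_1-c_1,\ldots,\theta_n-c_n]$ for every $c$, and reads off directly that the classes $[h_\ell]$ span and are independent in $\CWeights/\mathcal{H}_c$ for every $c$ simultaneously. This gives both the dimension count and a common basis with no leading-term bookkeeping at all.
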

\begin{proof}
By \Cref{thm:Bourbaki} and \Cref{lem:affine translation of invariant ring}, 
there are $h_1,\ldots,h_{|\weyl|} \in \CWeights$, such that
\[
    \CWeights
=   \bigoplus_{\ell=1}^{|\weyl|} h_\ell\cdot \CWeights^\weyl
=   \bigoplus_{\ell=1}^{|\weyl|}h_\ell\cdot \C[\fundinv{m} - c] .    
\]
Hence, for any 
$f=\sum_\ell h_\ell \, g_\ell(\theta_1-c_1,\ldots,\theta_n-c_n) \in \CWeights$ 
with $g_\ell\in\C[z_1,\ldots,z_n]$, 
we have $f - \sum_\ell h_\ell \, g_\ell(0) \in (\fundinv{m} - c)$. 
Since the $h_\ell$ are linearly independent in the quotient, the statement follows.
\end{proof}

\begin{proposition}\label{prop:generic a regular repr}
For $c\in \C^n$ generic, 
the ideal $(\fundinv{m} - c)$ in $\CWeights$ is radical and 
$\CWeights/(\fundinv{m} - c)$ affords the regular representation of $\weyl$. 
\end{proposition}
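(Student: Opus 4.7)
My plan is to argue geometrically. Identify $\CWeights$ with the coordinate ring of the algebraic torus $T := \Hom_\Z(\Weights,\C^*) \cong (\C^*)^n$, on which $\weyl$ acts via its action on $\Weights$. By \Cref{thm:Bourbaki}, the morphism
\[
    \Theta := (\theta_1,\ldots,\theta_n)\colon T \longrightarrow \C^n
\]
is finite and flat of degree $|\weyl|$ and realizes the categorical quotient $T \to T/\weyl$. The ideal $\mathcal{H}_c$ is then the defining ideal of the scheme-theoretic fiber $\Theta^{-1}(c)$, and \Cref{lem: quotient size} identifies $\CWeights/\mathcal{H}_c$ with the coordinate ring of that fiber, of $\C$-dimension $|\weyl|$.

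To establish radicality for generic $c$, I would invoke generic étaleness. The finite extension $\mathrm{Frac}(\CWeights)/\mathrm{Frac}(\CWeights^\weyl)$ is Galois with group $\weyl$ --- using that the action of a Weyl group on its weight lattice is faithful --- hence separable in characteristic zero, so $\Theta$ is generically étale. The branch locus $B \subset \C^n$ is then a proper closed subvariety, and for $c \notin B$ the fiber $\Theta^{-1}(c)$ is an étale subscheme of $T$ of length $|\weyl|$. Consequently, $\mathcal{H}_c$ is radical.

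For the regular representation claim, I would use the fact that in characteristic zero the set-theoretic fibers of the categorical quotient map of a finite group action coincide with the orbits (invariants separate orbits). Hence $\Theta^{-1}(c)$ is a single $\weyl$-orbit, and for $c \notin B$ this orbit has cardinality $|\weyl|$, so all stabilizers are trivial and $\weyl$ acts freely and transitively on the fiber. The induced $\weyl$-action on $\CWeights/\mathcal{H}_c \cong \C^{\Theta^{-1}(c)}$ is then the permutation action on a $\weyl$-torsor, which by definition is the regular representation.

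The step I expect to carry the real content, and hence the main obstacle, is the verification that the branch locus $B$ is a proper subvariety. Concretely, one can exhibit a nonzero discriminant polynomial on $\C^n$ cutting out $B$, obtained as the pushdown of the torus Jacobian $\det(x_i\,\partial \theta_j/\partial x_i)_{i,j} \in \CWeights$; its nonvanishing is precisely the generic étaleness statement and follows cleanly from characteristic zero, but it is the ingredient that goes beyond the direct content of \Cref{thm:Bourbaki} and \Cref{lem: quotient size}.
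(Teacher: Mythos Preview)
Your argument is correct and shares the same geometric backbone as the paper's: view $\mathcal{H}_c$ as the ideal of the fiber $\Theta^{-1}(c)$, identify that fiber with a single $\weyl$-orbit, and read off the regular representation from the free transitive action. The technical execution differs, however. You establish radicality via generic \'etaleness of the finite map $\Theta$ (Galois field extension in characteristic zero, hence proper branch locus), and then deduce that the orbit has full length from the fact that an \'etale fiber of degree $|\weyl|$ has $|\weyl|$ reduced points. The paper instead combines \Cref{lem: quotient size}, which gives $\dim_\C(\CWeights/\mathcal{H}_c)=|\weyl|$ for \emph{every} $c$, with the elementary inequality $\dim_\C(\CWeights/\mathcal{H}_c)\geq |V_c|$ (equality iff radical); once one knows the generic fiber is a full orbit of length $|\weyl|$, radicality drops out without invoking \'etaleness at all. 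Your route is the standard algebraic-geometry argument and is more self-contained regarding why the branch locus is proper (and your remark about the torus Jacobian is vindicated later in the paper, where \Cref{prop:Weyl determinant and denominator} identifies it with the Weyl denominator); the paper's route is more elementary and extracts more mileage from the already-proven \Cref{lem: quotient size}, at the cost of citing an external reference for the statement that generic fibers are full orbits.
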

\begin{proof}
Let $s\in\weyl$ and $z=(z_1,\ldots,z_n) \in (\C\setminus\{0\})^n$
be an element of the algebraic torus. 
Define a left group action $(s,z) \mapsto s\star z$ 
of $\weyl$ on $(\C\setminus\{0\})^n$ by 
\[
    (s \star z)_i
:=  z_1^{\alpha_{i,1}}\cdots z_n^{\alpha_{i,n}}, \quad
    \alpha_{i,j}
\in \Z, \quad
    s^{-1}(\fweight{i}) 
=   \alpha_{i,1} \, \fweight{1} + \ldots + \alpha_{i,n} \, \fweight{n} .
\]
This induces the multiplicative action of $\weyl$ on $\CWeights$, 
that is, $s\cdot f(z) = f(s^{-1} \star z)$. 
Denote by $V_c$ the set of all points in $(\C\setminus\{0\})^n$, 
on which all Laurent polynomials $f\in (\fundinv{m} - c)$ vanish. 
Since the $c_i$ are generic, this set is nonempty. 
By \cite[Prop. 3.1]{metzlaff2024}, 
$V_c$ is the orbit $\weyl\star z$ of a distinguished point 
$\tilde{z} \in (\C\setminus\{0\})^n$. 
In particular, the orbit has full length, 
that is, $\nops{\weyl}=\nops{\weyl\star \tilde{z}}$, 
and $(\fundinv{m} - c)$ is zero-dimensional. 
We have 
\[
    \dim_{\C} (\CWeights/(\fundinv{m} - c)) 
\geq\nops{V_c} 
=   \nops{\weyl\star \tilde{z}} 
=   \nops{\weyl}
\]
and equality holds if and only if $(\fundinv{m} - c)$ is radical. 
With \Cref{lem: quotient size}, 
we have $\dim_{\C} (\CWeights/(\fundinv{m} - c)) = \nops{\weyl}$, 
which shows that $(\fundinv{m} - c)$ is indeed radical. 

Now, 
the coordinate ring $\CWeights/(\fundinv{m} - c)$ of $V_c$ 
corresponds to the algebra $\C[\weyl\star \tilde{z}]$ 
of regular functions on $\weyl\star \tilde{z}$. 
We claim that 
$\Hom_\weyl(\weyl\star \tilde{z}, \C)$ 
and $\C[\weyl\star \tilde{z}]$ 
are isomorphic as $\weyl$-modules. 
For $z \in \weyl(\tilde{z})$, 
let $\iota_{z} \in \C[\weyl\star \tilde{z}]$ 
denote the indicator function of $z$, 
that is, $\iota_{z}(z)=1$ and 
$\iota_{z}(z')=0$ for all 
$z' \in \weyl\star \tilde{z} \setminus\{ z\}$. 
For any $s \in \weyl$, we have 
$s \cdot \iota_{z}(z') = \iota_{z}(s^{-1} \star z') = \iota_{s \star z}(z') $. 
This shows that 
\[
    \Hom_\weyl(\weyl\star \tilde{z}, \C) \to \C[\weyl\star \tilde{z}] , \, 
    \iota_{z} \mapsto z
\]
is a $\weyl$-module homomorphism and bijective. 
Hence, the claim is true and the statement follows from the observation that $\C[\weyl] $ and $ \C[\weyl\star \tilde{z}]$ are isomorphic as $\weyl$-modules.
\end{proof}

\begin{theorem}\label{thm:Regual_representation_multiplicative}
The multiplicative coinvariant space $\CWeights_\weyl$ affords the regular representation of $\weyl$. 
\end{theorem}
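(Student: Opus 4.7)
My plan is to use a character-theoretic deformation argument based on \Cref{prop:generic a regular repr}, viewing $\CWeights_\weyl = \CWeights/\mathcal{H}_0$ as the special fiber in the family $\{\CWeights/\mathcal{H}_c\}_{c\in\C^n}$ whose generic members are already known to carry the regular representation, and then showing that the $\weyl$-module structure varies polynomially in the deformation parameter $c$. The starting point is to fix a normal set $\{h_1,\ldots,h_N\}\subseteq\CWeights$ for $\CWeights_\weyl$ with $N = \nops{\weyl}$: by \Cref{lem: quotient size}, this same set is a normal set for every $\CWeights/\mathcal{H}_c$, so the classes of the $h_\ell$ provide a $c$-independent $\C$-basis across the entire family.

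Next, I would invoke the freeness of $\CWeights$ as a $\CWeights^\weyl$-module from \Cref{thm:Bourbaki}(2), combined with \Cref{lem:affine translation of invariant ring}. Since the $h_\ell$ form a free module basis, for each $s\in\weyl$ there are \emph{unique} polynomials $p_{k,\ell}^{(s)}\in\C[z_1,\ldots,z_n]$ with
\[
    s\cdot h_\ell
=   \sum_{k=1}^N h_k\,p_{k,\ell}^{(s)}(\orb{1},\ldots,\orb{n}).
\]
Reducing modulo $\mathcal{H}_c = (\orb{1}-c_1,\ldots,\orb{n}-c_n)$ substitutes $\orb{i}\mapsto c_i$, so that the matrix representing $s$ on $\CWeights/\mathcal{H}_c$ in the basis $h_1,\ldots,h_N$ has entries $p_{k,\ell}^{(s)}(c)$, which depend polynomially on $c$. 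In particular, for each fixed $s\in\weyl$, the character $\chi_c(s)$ is a polynomial function of $c$.

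Finally, I would conclude by Zariski density: \Cref{prop:generic a regular repr} yields $\chi_c = \chi_{\mathrm{reg}}$ on a nonempty Zariski open subset of $\C^n$, where $\chi_{\mathrm{reg}}$ is the character of the regular representation. Since $\C^n$ is irreducible and $\chi_c(s) - \chi_{\mathrm{reg}}(s)$ is polynomial in $c$, it must vanish identically, and evaluating at $c = 0$ gives $\chi_0 = \chi_{\mathrm{reg}}$ on $\CWeights_\weyl$. As characters determine finite-dimensional complex representations of finite groups up to isomorphism, $\CWeights_\weyl$ affords the regular representation.

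The principal point to check, rather than a genuine obstacle, is the polynomial dependence of the action matrices on $c$. This rests on the \emph{uniqueness} of the decomposition in the second step, which is itself a direct consequence of freeness over $\CWeights^\weyl$; without uniqueness, the matrix entries would not be well-defined polynomials in $c$ and the density argument would break down. Once this is in hand, the specialization from generic to special $c$ is essentially one line.
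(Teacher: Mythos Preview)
Your argument is correct and shares its scaffolding with the paper's proof: both fix a common normal set $\{h_1,\ldots,h_N\}$ for every $\mathcal{H}_c$ via \Cref{lem: quotient size} and exploit freeness of $\CWeights$ over $\CWeights^\weyl$ to control the action of $s\in\weyl$ on the $h_\ell$. The paper, however, finishes more directly: it asserts that the basis-to-basis linear map $\Phi_c:\CWeights/\mathcal{H}_c\to\CWeights_\weyl$, $[h_\ell]_c\mapsto[h_\ell]_0$, is already a $\weyl$-module isomorphism, which in your notation amounts to the representing matrices $(p_{k,\ell}^{(s)}(c))_{k,\ell}$ being \emph{independent} of $c$ (equivalently, to the $\C$-span of the $h_\ell$ being $\weyl$-stable inside $\CWeights$). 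Your route sidesteps that claim entirely: you only need the matrix entries to be polynomial in $c$, and then Zariski density together with \Cref{prop:generic a regular repr} forces the character to be constant. This is a slightly more robust way to pass from generic to special $c$, since it does not depend on any particular choice of normal set, at the cost of one standard density step.
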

\begin{proof}
Let $c\in\C^n$ be a generic point. 
We prove that the vector spaces $\CWeights_\weyl$ and 
$\CWeights / (\fundinv{m} - c)$ are isomorphic as $\weyl$-modules. 
Since \Cref{prop:generic a regular repr} implies 
$\CWeights / (\fundinv{m} - c) \cong \C[\weyl]$, 
this will show the claim. 

By \Cref{lem: quotient size}, 
there exists a normal set 
$\{h_1,\ldots,h_{|\weyl|}\} \subset \CWeights$ of 
both $\hilbertideal{m} = (\fundinv{m} - 0)$ and $(\fundinv{m} - c)$, 
such that 
\[
    \mathcal{R}_c
:=  \CWeights/(\fundinv{m} - c) 
=   \operatorname{span}\{[h_1]_{\mathcal{R}_c},\ldots,[h_{|\weyl|}]_{\mathcal{R}_c}\} 
\]
with $\mathcal{R}_0 = \CWeights_\weyl$ and thus
\[
    \Phi_c : 
    \mathcal{R}_c \to \CWeights_\weyl, \, 
    \sum_{\ell=1}^{|\weyl|} \lambda_\ell \, [h_\ell]_{\mathcal{R}_c} 
    \mapsto 
    \sum_{\ell=1}^{|\weyl|} \lambda_\ell \, [h_\ell]_{\mathcal{R}_0} \]
is a well-defined vector space isomorphism. 
Moreover, the ideals $\hilbertideal{m}$ and $(\fundinv{m} - c)$ are $\weyl$-stable. 
Since $\CWeights$ is a free $\CWeights^{\weyl}$-module, 
we have $s \cdot h_\ell \in \operatorname{span}\{h_1,\ldots,h_{|\weyl|}\}$ for all $s \in \weyl$. 
Thus, $\Phi_c$ is a $\weyl$-module isomorphism. 
\end{proof}

A multiplicative coinvariant basis can for example be computed as follows. 

\begin{algorithm}
\caption{Multiplicative Coinvariant Bases}\label{alg_MultiplicativeCoinvariantBasis}
\KwData{$\fundinv{m} \subseteq \C[x_1^{\pm 1},\ldots,x_n^{\pm 1}]$ 
a set of fundamental invariants for $\CWeights^\weyl$}
\KwResult{$\coinvariantbasis{m} \subseteq \C[x_1^{\pm 1},\ldots,x_n^{\pm 1}]$ 
a normal set for $\CWeights_\weyl$}
$\hilbertideal{m}=(\fundinv{m})$\;
$G=\mathrm{GB}(\hilbertideal{m})$ \Comment{Gr\"{o}bner basis}\;
$N=\mathrm{NS}(G)$ \Comment{normal set}\;
\textbf{return} $\coinvariantbasis{m}=N$\;
\end{algorithm}

\begin{example}\label{example:C_2 normal set}
Recall from  {\Cref{example_c2_fundamental_invariants}} 
that the multiplicative fundamental invariants for $\RootC[2]$ 
in the basis of fundamental weights are the orbit polynomials 
$\theta_1 = x_1+x_1^{-1}+x_1^{-1}x_2+x_1x_2^{-1}$ and 
$\theta_2 = x_2+x_2^{-1}+x_1^{-2}x_2+x_1^2x_2^{-1}$. 
A normal set of the Hilbert ideal $\hilbertideal{m} = (\theta_1,\theta_2)$ 
with respect to the graded reverse lexicographical ordering 
$x_1\succ x_2 \succ x_1^{-1}\succ x_2^{-1}$ is 
\[  
    \left\{
    1, \, x_1, \, x_1x_2^{-1}, \, x_2, \, x_1^{-1}, \, x_1^{-2},\, x_1^{-1}x_2^{-1}, \, x_2^{-1}
    \right\}. 
\]
Note that this normal set consists of monomials but does not tell us anything about the $\weyl$-module structure of $\CWeights_\weyl$.
We shall treat this circumstance in \Cref{sec_associated_graded_equivariants,sec_graded_transfer}. 
\end{example}

\section{Multiplicative $\weyl$-Harmonics}\label{Sec:Weyl Harmonics}

Let $\weyl$ be a Weyl group of a crystallographic root system with fundamental weights $\fweight{i}$
and $\Weights=\Z\fweight{1}\oplus\ldots\oplus\Z\fweight{n}$ be the weight lattice. 
In analogy to \Cref{thm_additive_harmonics}, 
we show in the present section that the multiplicative coinvariant space 
can also be understood as a space of harmonics. 

For this, we define the \textbf{Euler derivation} $\partial_i$ as the linear operator on $\CWeights$ with 
$\partial_i \,x^\alpha= \alpha_i\,x^\alpha$ for $\alpha\in\Z^n$. 
The associated nabla operator is denoted by $\Eulernabla:= [ \partial_1 ,\ldots, \partial_n ]^t$. 
This definition is motivated by the derivative of the exponential function, 
see also \cite{HoffmanWithers,MuntheKaas2012}. 
Notation is the same as for the derivation on $\SymWeights$ before \Cref{thm_additive_harmonics} but clear form the context. 

Recall from \cite[Ch.~VI,~\S3,~Thm.~1]{bourbaki456} 
that the multiplicative invariant ring $\CWeights^\weyl$ of a Weyl group is generated as an algebra by the orbit polynomials 
$\orb{i} := \frac{1}{\nops{\weyl}}\sum_{s\in\weyl} \mathfrak{e}^{s(\fweight{i})}$, 
where $\fweight{i}$ are the fundamental weights. 
Denote the corresponding Jacobian matrix by $J := [ \Eulernabla \, \orb{1} \vert \ldots \vert \Eulernabla \, \orb{n} ] \in (\CWeights)^{n\times n}$. 

\begin{proposition}\label{prop:Weyl determinant and denominator}
Let $\delta:=\fweight{1}+\ldots+\fweight{n} \in\Weights$ be the sum of the fundamental weights. 
The Jacobian determinant $\det(J)$ is, up to a nonzero scalar, equal to 
\[  
    \aorb{\delta} 
:=  \frac{1}{\nops{\weyl}} \sum\limits_{s\in\weyl} \det(s) \, \mathfrak{e}^{s(\delta)} 
\in \CWeights. 
\]
\end{proposition}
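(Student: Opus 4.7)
The plan is to show that $\det(J)$ is a $\weyl$-antisymmetric Laurent polynomial, use the Weyl character formula to identify the space of such antisymmetric elements with $\aorb{\delta}\cdot\CWeights^\weyl$, and then pin down the scalar factor by examining the $\mathfrak{e}^\delta$-coefficient.

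\textbf{Step 1 (antisymmetry of $\det(J)$).} I would first work out the transformation of the Euler derivations under $\weyl$. Writing the matrix of $w\in\weyl$ in the basis $\fweight{1},\ldots,\fweight{n}$ as $W=(w_{ij})$ and evaluating on the monomial basis $\{\mathfrak{e}^{\weight}\}$ directly gives the contragredient identity
\[
    w\cdot(\partial_i f) = \sum_{k=1}^n (W^{-1})_{ik}\,\partial_k(w\cdot f).
\]
Applied entrywise to $J_{ij}=\partial_i\orb{j}$ with $w\cdot\orb{j}=\orb{j}$ from \Cref{thm:Bourbaki}, this becomes the matrix identity $w\cdot J = W^{-1}J$. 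Taking determinants and using $\det(w)\in\{\pm 1\}$ yields $w\cdot\det(J)=\det(W)^{-1}\det(J)=\det(w)\det(J)$, so $\det(J)$ lies in the $\det$-isotypic component of $\CWeights$.

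\textbf{Step 2 (factoring out $\aorb{\delta}$).} Grouping the monomial support of an antisymmetric Laurent polynomial by $\weyl$-orbits and using that on any orbit whose stabilizer contains a reflection the coefficients are forced to vanish, the antisymmetric component is spanned by the orbit sums $\aorb{\weight}$ with $\weight$ strictly dominant. Every strictly dominant weight has the form $\delta+\lambda$ with $\lambda$ dominant, and the Weyl character formula reads $\aorb{\delta+\lambda}=\chi_\lambda\,\aorb{\delta}$ with $\chi_\lambda\in\CWeights^\weyl$. I therefore obtain $\det(J)=g\cdot\aorb{\delta}$ for a unique $g\in\CWeights^\weyl$.

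\textbf{Step 3 (the scalar is a nonzero constant).} Since $\partial_i\orb{j}$ has monomial support contained in $\weyl\cdot\fweight{j}$, Leibniz expansion shows that every monomial $\mathfrak{e}^{\weight}$ occurring in $\det(J)$ satisfies $\weight = \sum_{j=1}^n w_j\fweight{j}$ for some $w_j\in\weyl$; the standard fact $w\fweight{j}\leq\fweight{j}$ in the dominance order then gives $\weight\leq\delta$. Writing $g=\sum_\lambda c_\lambda\chi_\lambda$ over dominant $\lambda$, so that $\det(J)=\sum_\lambda c_\lambda\aorb{\delta+\lambda}$, this support bound forces $c_\lambda=0$ for $\lambda\neq 0$ and hence $g=c_0$ is a constant. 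To verify $c_0\neq 0$, I would isolate the coefficient of $\mathfrak{e}^\delta$: the equality $\sum_j w_j\fweight{j}=\delta$ forces $w_j\in\Stab(\fweight{j})$, in which case the $i$-th coordinate of $w_j\fweight{j}$ in the $\fweight{}$-basis is $\delta_{ij}$, so only the identity permutation contributes to the Leibniz sum, yielding a nonzero value proportional to $\prod_{j=1}^n\nops{\Stab(\fweight{j})}$.

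The main obstacle is step 1: one must carefully track the non-equivariance of the individual Euler derivations to extract the clean matrix identity $w\cdot J = W^{-1}J$, which is what produces the sign twist on the determinant. Once antisymmetry is in place, steps 2 and 3 reduce to standard Weyl-character and dominance-order arguments.
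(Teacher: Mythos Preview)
Your proposal is correct and follows essentially the same route as the paper's proof: both establish anti-invariance of $\det(J)$ via the matrix identity $s\cdot J = S^{\pm 1}J$, invoke the fact that the anti-invariants form $\aorb{\delta}\cdot\CWeights^\weyl$, and then compare leading monomials in the dominance order to force the invariant factor to be a scalar. Your Step~3 is slightly more explicit than the paper's (you expand the invariant factor in characters and track the Leibniz sum, whereas the paper simply notes that the diagonal entries of $J$ have highest monomial $\mathfrak{e}^{\fweight{i}}$ while the off-diagonal entries have strictly lower ones), but the underlying argument is the same.
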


A Laurent polynomial $f \in \CWeights$ satisfying 
$s \cdot f = \det (s) f$ whenever $s \in \weyl$ 
is called \textbf{anti-invariant}. 
By Weyl's character formula \cite[Ch.~VIII,~\S9,~Thm.~1]{bourbaki78}, 
the space of all anti-invariants is a free $\CWeights^\weyl$-module 
of rank $1$ generated by $\aorb{\delta}$, 
which is therefore also called the \textbf{Weyl denominator}. 

\begin{proof}
For $s\in\weyl$ and $z\in(\C\setminus\{0\})^n$, one observes that $J(s^{-1}\star z) = (s_{i,j})_{1\leq i,j\leq n}\,J(z)$, where the right hand side is given by matrix multiplication. 
Hence, $\det(J)$ is anti-invariant and thus there exists a unique $f\in\CWeights^\weyl$, 
such that $\det(J) = f\,\aorb{\delta}$. 
Define a partial ordering on $\Weights$ by 
$\weight \prec \fweight{}$ if and only if $0\neq \fweight{} - \weight$ 
is a sum of positive roots of the corresponding root system. 
We have 
\[
	J_{ij}
=	\partial_i \orb{j}
=	\dfrac{\partial_i}{\nops{\weyl}} \sum\limits_{s \in \weyl} \mathfrak{e}^{s(\fweight{j})}
=	\dfrac{\partial_i}{\nops{\weyl( \fweight{j} )}} \left( \mathfrak{e}^{\fweight{j}} 
    + \sum\limits_{\weight\prec \fweight{j}} \tilde{c}_\weight\,\mathfrak{e}^\weight \right) ,
\]
where $\tilde{c}_\weight\in\{0,1\}$. 
The diagonal elements of the matrix $J$ have highest monomial $\mathfrak{e}^{\fweight{i}}$ in this ordering, 
while the non-diagonal entries have highest monomial $\mathfrak{e}^\weight$ for some $\weight\prec \fweight{j}$. 
Hence, there exist $c_\weight \in \Z$ such that 
\begin{align*}
	f\,\aorb{\delta}
\, = \, &	\det(J)
\, = \, 	\dfrac{1}{\prod\limits_{i=1}^n {\nops{\weyl(\fweight{j})}}} \, \left( \mathfrak{e}^{\delta} 
            + \sum\limits_{\weight \prec \delta} c_\weight\,\mathfrak{e}^\weight \right) .
\end{align*}
By comparison of leading monomials, $f$ must be a scalar. 
\end{proof}

We call the vector space $\harmonics{m}$ generated by 
$\{\partial_{i_1}\ldots\partial_{i_\ell} \det(J)\,\vert\,\ell\in\N,\,1\leq i_1\leq \ldots\leq i_\ell \leq n\}$ 
the space of \textbf{multiplicative $\weyl$-harmonics}. 

\begin{theorem}\label{thm_multiplicative_harmonics_regular_representation}
The space of multiplicative $\weyl$-harmonics affords the regular representation of $\weyl$. 
\end{theorem}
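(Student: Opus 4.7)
The plan is to realize $\harmonics{m}$ as a quotient of the polynomial algebra $\C[z_1,\ldots,z_n]$ by the vanishing ideal of a free $\weyl$-orbit, carrying the natural polynomial action twisted by the sign representation, and then invoke the identity $\mathrm{sgn}\otimes\C[\weyl]\cong\C[\weyl]$ for the regular representation.

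First I introduce the linear map $\pi : \C[z_1,\ldots,z_n] \to \CWeights$, $p \mapsto p(\partial_1,\ldots,\partial_n)\det(J)$. This is well defined because the Euler derivations pairwise commute, and its image is exactly $\harmonics{m}$ by the definition of multiplicative $\weyl$-harmonics. Combining \Cref{prop:Weyl determinant and denominator} with the eigenvalue relation $\partial_i\,\mathfrak{e}^{\weight}=\weight_i\,\mathfrak{e}^{\weight}$ yields the explicit formula
\[
\pi(p) \;=\; c\sum_{s\in\weyl}\det(s)\,p\bigl(s(\delta)\bigr)\,\mathfrak{e}^{s(\delta)}
\]
for a nonzero constant $c$, where $s(\delta)\in\Z^n$ denotes the coordinate vector of $s(\delta)=s(\fweight{1}+\cdots+\fweight{n})$ in the basis of fundamental weights.

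Next I equip $\C[z_1,\ldots,z_n]$ with the twisted action $t\bullet p := \det(t)\,(p\circ t^{-1})$, which is the tensor product of $\mathrm{sgn}$ with the natural polynomial action of $\weyl$. A direct reindexing $u:=ts$ in the formula above gives $t\cdot\pi(p)=\pi(t\bullet p)$, so $\pi$ is $\weyl$-equivariant and in particular $\harmonics{m}$ is $\weyl$-stable. Since $\delta$ is strictly dominant, $\stab{\delta}$ is trivial, so the orbit $\weyl(\delta)$ has $|\weyl|$ pairwise distinct elements and the exponentials $\{\mathfrak{e}^{s(\delta)}\}_{s\in\weyl}$ are linearly independent in $\CWeights$. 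The formula therefore identifies $\ker\pi$ with the vanishing ideal $I(\weyl(\delta))\subset\C[z_1,\ldots,z_n]$, and yields the equivariant isomorphism $\harmonics{m}\cong\mathrm{sgn}\otimes\C[\weyl(\delta)]$, where $\C[\weyl(\delta)]$ carries the natural permutation $\weyl$-action.

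Since $\weyl$ acts freely transitively on $\weyl(\delta)$, the permutation representation $\C[\weyl(\delta)]$ is the regular representation $\C[\weyl]$. Tensoring the regular representation with any one-dimensional character returns the regular representation, because each irreducible of $\weyl$ appears with multiplicity equal to its dimension and this remains so after twisting by $\mathrm{sgn}$. Hence $\harmonics{m}\cong\mathrm{sgn}\otimes\C[\weyl]\cong\C[\weyl]$, as required. The main subtlety lies in pinning down $\ker\pi$ geometrically as $I(\weyl(\delta))$; this rests entirely on the regularity of $\delta$, which prevents any orbit collapse and keeps the exponentials indexed by $\weyl(\delta)$ linearly independent, so that no accidental cancellation shrinks the image below $|\weyl|$.
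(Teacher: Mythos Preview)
Your proof is correct and takes a more conceptual route than the paper. The paper shows directly that $\harmonics{m}$ equals the span $L=\spn_\C\{\mathfrak{e}^{s(\delta)}:s\in\weyl\}$ by an iterative elimination: starting from $\partial_1\cdots\partial_n\,\aorb{\delta}$, it repeatedly applies operators of the form $\partial_j-\alpha_j\,\mathrm{id}$ to kill terms one at a time until each individual $\mathfrak{e}^{s(\delta)}$ is isolated inside $\harmonics{m}$; since $L$ is visibly the regular representation, equality of dimensions finishes the argument. Your approach bypasses this explicit term-by-term separation by packaging all Euler derivations into the single surjection $\pi$ and recognizing its kernel as the vanishing ideal of the free orbit $\weyl(\delta)$, so that both the dimension and the $\weyl$-module structure drop out at once from $\C[z]/I(\weyl(\delta))\cong\C[\weyl]$ together with $\mathrm{sgn}\otimes\C[\weyl]\cong\C[\weyl]$. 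The paper's argument is more elementary in that it uses no vanishing ideals and no character twisting, but it requires the somewhat ad hoc support stratification $W_0,W_1,\ldots$; yours is shorter, makes the appearance of the sign character transparent, and exposes the underlying geometry, at the cost of invoking the standard fact that the regular representation absorbs one-dimensional twists.
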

\begin{proof}
By \Cref{prop:Weyl determinant and denominator} the Jacobian determinant of $\weyl$ is, up to a scalar, equal to the Weyl denominator $\aorb{\delta}$. 
Since $\delta=\fweight{1}+\ldots+\fweight{n}$ is the sum of all the fundamental weights, 
it is contained in the interior of the fundamental Weyl chamber. 
But $\weyl$ acts transitively on the set of all chambers and thus $\nops{\weyl( \delta)} = \nops{\weyl}$ and $\aorb{\delta}$ is a Laurent polynomial with $\nops{\weyl}$-many terms $c_\weight\,x^\weight$, 
where $\weight \in \weyl( e)$ and $c_\weight = \pm 1/\nops{\weyl}$. 
From the definition of $\partial_i$, it follows that $\harmonics{m}$ is a subspace of $L:= \spn_{\C}\{ \mathfrak{e}^\weight\,\vert\,\weight\in\weyl(\delta)\}$. 
Since $\nops{\weyl (\delta)}=\nops{\weyl}$, $L$ affords the regular representation of $\weyl$. 

We show that $\harmonics{m}=L$ with a recursive argument. 
Let $W_0:=\{\weight=\alpha_1\,\fweight{1}+\ldots+\alpha_n\,\fweight{n} \in \weyl (\delta) \, \vert \, \forall \, 1\leq i \leq n: \, \alpha_i \neq 0 \}$ be the set of all monomials in $\aorb{\delta}$ with full support. 
First, we want to show that any monomial occurring in $\aorb{\delta}$ is contained in $\harmonics{m}$. 
We have $f:=\partial_1\ldots\partial_n \aorb{\delta}=\sum_{\weight \in W_0} c_\weight \mathfrak{e}^\weight \in \harmonics{m}$ for some $c_\weight \in \C\setminus\{0\}$. 
We order all the elements in $W_0$ totally $\weight^{(1)} \prec \weight^{(2)}  \prec \ldots \prec \weight^{(n_0)}$ and show that $\mathfrak{e}^{\weight^{(1)}} \in \harmonics{m}$. 
Let $\weight^{(\ell)} = \alpha_1^\ell\,\fweight{1} + \ldots + \alpha_n^\ell \, \fweight{n}$ and $1\leq j\leq n$ 
with $\weight_j^{(1)} \neq \weight_j^{(2)}$. 
Consider $\partial_j f- \alpha_j^2 \, f = \sum_{i=1}^{n_0} c_{\weight^{(i)}}(\alpha_j^{(i)}-\alpha_j^{(2)})\mathfrak{e}^{\weight^{(i)}} \in \harmonics{m}$.
By iterating this procedure we find $\mathfrak{e}^{\weight^{(1)}} \in \harmonics{m}$ and eventually that 
$\mathfrak{e}^{\weight^{(i)}} \in \harmonics{m}$ for any $1 \leq i \leq n_0$. 
We can repeat the same strategy by considering 
$f_1 := \sum_{\weight \in \weyl (\delta) \setminus W_0} c_\weight \mathfrak{e}^\weight$ 
and an inclusion-wise maximal support set $W_1$ in $f_1$ of cardinality $n_{1}$. 
After finitely many steps we obtain $\harmonics{m}=L$. 
\end{proof}

Concluding with \Cref{thm:Regual_representation_multiplicative}, 
the multiplicative coinvariant space $\CWeights_\weyl$ is, as a $\weyl$-module, 
isomorphic to the space of multiplicative $\weyl$-harmonics. 

\begin{remark}
For the action of a pseudoreflection group on polynomials and, 
in particular, for the additive action of a Weyl group on $\SymWeights = \C[X_1,\ldots,X_n]$, 
the additive $\weyl$-harmonic polynomials can equivalently be defined as 
the orthogonal complement of $\hilbertideal{a}$ with respect to the inner product 
$\sprod{\cdot,\cdot} : \SymWeights \times \SymWeights \to \C, \sprod{f,g}:=(\partial_f (g))(0)$, where 
$\partial_f := \sum_{\alpha}c_\alpha \partial_1^{\alpha_1} \ldots \partial_n^{\alpha_n}$ 
if $f=\sum_\alpha c_\alpha X^\alpha$, 
see for example {\cite{swanson}}.
The name ``harmonics'' originates from this construction.
As a consequence we have that the tensor product over $\C$ of the additive coinvariant space $\SymWeights/\hilbertideal{a}$ with the space of additive $\weyl$-harmonics is isomorphic to $\SymWeights$. 

Unfortunately, this does not hold in the multiplicative setting. 
For instance, for $\weyl$ the Weyl group of the root system $\RootA[2]$, we found that 
\[
\CWeights^\weyl \otimes_\C 
\spn_{\C}\{\partial_{1}^k\,\partial_{2}^\ell \, \aorb{\delta}\,\vert\,k, \ell\in\N\}
\] 
is a proper subset of $\CWeights$, where the Weyl denominator $\aorb{\delta}$ is equal to the Laurent polynomial $g$ 
in  {\Cref{table:A_2 additive and multiplicative coinvariants}} up to a scalar. 
\end{remark}

\section{Associated Graded Equivariants}\label{sec_associated_graded_equivariants}

In the previous two sections, we have described the multiplicative coinvariant space in terms of representation-theoretic properties. 
From a computational point of view however, 
the obtained basis for $\CWeights_\weyl$ from \Cref{alg_MultiplicativeCoinvariantBasis}
does not reflect those properties. 
Meanwhile, many combinatorial works facilitate the computation in the additive setting, 
see \cite{yamada1993higher,ariki1997higher,morita1998higher}. 
Our next goal in this and the following section is therefore to provide an algorithm, 
that allows the transfer of an additive coinvariant basis to a multiplicative one. 
This makes it possible to obtain a symmetry adapted basis for $\CWeights_\weyl$ from one for $\SymWeights_\weyl$. 

To commence, we follow \cite{Baek2012basicinvariants} and consider the algebra homomorphisms
\begin{equation}
    \SymWeights \to \C,\,X_i\mapsto 0
    \quad \mbox{and} \quad
    \CWeights \to \C,\,x_i\mapsto 1
\end{equation}
with kernels $\augmentationideal{a} = (X_i\,\vert\,1\leq i\leq n)$ and 
$\augmentationideal{m} = (1-x_i^{-1}\,\vert\,1\leq i\leq n)$ respectively. 

\begin{lemma}
For a nonnegative integer $d\in\N$, 
\[
    \phi_d: \SymWeights / \augmentationideal{a}^{d+1} \to \CWeights / \augmentationideal{m}^{d+1} ,\, [X_i] \mapsto [1 - x_i^{-1}]
\]
is an algebra isomorphism. 
\end{lemma}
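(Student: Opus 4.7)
The plan is to produce a two-sided inverse for $\phi_d$ by constructing an algebra homomorphism $\psi_d$ in the opposite direction, and then verify the inverse relation on generators.

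First, I would check that $\phi_d$ is well-defined. Since $\SymWeights$ is the free commutative $\C$-algebra on $X_1,\ldots,X_n$, the assignment $X_i \mapsto [1-x_i^{-1}]$ extends uniquely to an algebra homomorphism $\tilde\phi_d:\SymWeights \to \CWeights/\augmentationideal{m}^{d+1}$. Each image $[1-x_i^{-1}]$ lies in $\augmentationideal{m}/\augmentationideal{m}^{d+1}$, so any product of $d+1$ elements of $\augmentationideal{a}=(X_1,\ldots,X_n)$ maps into $\augmentationideal{m}^{d+1}$ and is therefore zero in the target; hence $\tilde\phi_d$ descends to $\phi_d$.

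Second, I would construct the inverse. The key observation is that in $\SymWeights/\augmentationideal{a}^{d+1}$, the element $[1-X_i]$ is a unit, with
\[
[1-X_i]\cdot\sum_{k=0}^{d}[X_i]^k \;=\; 1-[X_i]^{d+1} \;=\; 1,
\]
since $[X_i]^{d+1}\in\augmentationideal{a}^{d+1}$. Because $\CWeights$ is the localization of $\C[x_1,\ldots,x_n]$ at the multiplicative set generated by $x_1,\ldots,x_n$, and the proposed images are units, the universal property yields a well-defined algebra homomorphism $\tilde\psi_d:\CWeights \to \SymWeights/\augmentationideal{a}^{d+1}$ with $x_i^{-1}\mapsto [1-X_i]$ and $x_i\mapsto \sum_{k=0}^{d}[X_i]^k$. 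Now $\tilde\psi_d(1-x_i^{-1}) = 1-[1-X_i] = [X_i]\in \augmentationideal{a}/\augmentationideal{a}^{d+1}$, so $\tilde\psi_d$ sends $\augmentationideal{m}^{d+1}$ to $(\augmentationideal{a}/\augmentationideal{a}^{d+1})^{d+1}=0$ and descends to $\psi_d:\CWeights/\augmentationideal{m}^{d+1}\to \SymWeights/\augmentationideal{a}^{d+1}$.

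Finally, both compositions $\phi_d\circ\psi_d$ and $\psi_d\circ\phi_d$ are algebra endomorphisms, so it suffices to test them on a set of generators: $\phi_d(\psi_d([x_i^{-1}])) = \phi_d([1-X_i]) = [1-(1-x_i^{-1})] = [x_i^{-1}]$, and $\psi_d(\phi_d([X_i])) = \psi_d([1-x_i^{-1}]) = 1-[1-X_i] = [X_i]$. The main obstacle is essentially the second step: one must convince oneself that the formal geometric series really provides a well-defined inverse to $[1-X_i]$ in the truncated quotient and that this assembles into a genuine ring homomorphism on all of $\CWeights$, rather than merely on generators. Everything else is bookkeeping.
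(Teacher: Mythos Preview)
Your proof is correct and follows the same approach as the paper: both construct the inverse by sending $[x_i^{-1}]\mapsto [1-X_i]$ and noting that $[x_i]$ must go to the truncated geometric series $[1+X_i+\cdots+X_i^d]$ modulo $\augmentationideal{a}^{d+1}$. Your write-up simply fills in the well-definedness checks (descent modulo the augmentation powers, and the universal property of the localization) that the paper leaves implicit.
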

\begin{proof}
The inverse homomorphism is defined by $\phi_d^{-1}([x_i^{-1}]) = [1-X_i]$ and $\phi_d^{-1}([x_i]) = [1+X_i+X_i^2+\ldots+X_i^d]$ by geometric series expansion modulo $\augmentationideal{a}^{d+1}$. 
\end{proof}

Note that $\phi_d$ can be lifted to an algebra homomorphism $\phi: \SymWeights \to \CWeights$. 
Conversely, any $\psi: \CWeights \to \SymWeights$ with
$1=\psi(1)=\psi(x_i\,x_i^{-1})=\psi(x_i)\,\psi(x_i^{-1})$ forces $\psi(x_i)$ to be a unit in $\SymWeights$, 
that is, $\psi(x_i)\in\C\setminus\{0\}$. 
Hence, $\phi$ is not invertible. 

Instead, we consider the associated graded algebras \cite{Bruns_Herzog_1998}
\begin{equation}
    \GradedWeights{a} := \bigoplus\limits_{d\in\N} \augmentationideal{a}^d / \augmentationideal{a}^{d+1}
    \quad \mbox{and} \quad
    \GradedWeights{m} := \bigoplus\limits_{d\in\N} \augmentationideal{m}^d / \augmentationideal{m}^{d+1} .
\end{equation}
The quotient $\augmentationideal{a}^d / \augmentationideal{a}^{d+1}$ 
is isomorphic to the vector space $\SymWeights_d$ of homogeneous polynomials of degree $d$. 
In particular, we have $\GradedWeights{a}\cong\SymWeights$ as graded algebras.  

For $f\in\CWeights$ on the other hand, we denote by $f^*:=[f_0]+[f_1]+[f_2]+\ldots$ the associated graded element in $\GradedWeights{m}$, 
where $f_{0}$ is the normal form of $f$ with respect to $\augmentationideal{m}$ and iteratively 
$f_d$ is the normal form of $f-f_{d-1}-f_{d-2}-\ldots-f_0$ 
with respect to $\augmentationideal{m}^{d+1}$ so that $[f_d] \in \augmentationideal{m}^d / \augmentationideal{m}^{d+1}$. 
We have $f=f_0+f_1+f_2+\ldots$ and, since $\GradedWeights{m}$ is Noetherian, this iteration stops at some point. 

\begin{example}
 {(1)} Consider the homogeneous polynomial 
\[
    F
=   X_1\,X_2-X_1^2\in\C[X_1,X_2]_2 \cong \augmentationideal{a}^2 / \augmentationideal{a}^3 .
\]
Its image under $X_i \mapsto 1-x_i^{-1}$ decomposes into 
\begin{align*}
    f
=&  \frac{1}{x_1} - \frac{1}{x_1^2} - \frac{1}{x_2} + \frac{1}{x_1\,x_2} \\
=&  \underbrace{\left(\frac{1}{x_1\,x_2} - x_1 - \frac{2}{x_1} - \frac{1}{x_2} + 3\right)}_{
    =f_2}
+   \underbrace{\left(-\frac{1}{x_1^2} + x_1 + \frac{3}{x_1} - 3\right)}_{
    =f_3} 
\end{align*}
with $[f_2]\in\augmentationideal{m}^2 / \augmentationideal{m}^3$ and $[f_3]\in\augmentationideal{m}^3 / \augmentationideal{m}^4$. 

 {(2)} Now, let $\weyl$ be a Weyl group of the root system $\RootC[2]$ as in \Cref{example:C_2 normal set}. 
Since $\weyl(\fweight{1})=\{\fweight{1},\fweight{1}-\fweight{2},\fweight{2}-\fweight{1},-\fweight{1}\}$, we have 
\begin{align*}
    \C[x_1^{\pm 1},x_2^{\pm 1}]^\weyl
\ni&\,x_1 + \frac{x_1}{x_2} + \frac{x_2}{x_1} + \frac{1}{x_1} - 4 \\
=&  \underbrace{\left(\frac{-2}{x_1\,x_2} + 2\,x_1 + x_2 + \frac{4}{x_1} + \frac{3}{x_2} - 8 \right)}_{
    =f_2} \\
&+  \underbrace{\left(\frac{x_1}{x_2} + \frac{x_2}{x_1} + \frac{2}{x_1\,x_2} - x_1 - x_2 - \frac{3}{x_1} - \frac{3}{x_2} + 4\right)}_{
    =f_3}. 
\end{align*}
Neither of the two summands is stable under the $\weyl$-transformation 
$s_1 : \, ( x_1 , x_2 ) \mapsto ( x_1^{-1} \, x_2 , x_2 )$. 
\end{example}

We conclude from the example that $\phi_d$ preserves neither homogeneity nor equivariance. 
We explain now how working over the associated graded algebras instead solves this circumstance. 

\begin{proposition}[\cite{Baek2012basicinvariants,CalmesPetrovZainoulline2013}]\label{prop_GradedEquiv}
The quotients 
$\SymWeights_d \cong \augmentationideal{a}^d / \augmentationideal{a}^{d+1}$ and $\augmentationideal{m}^d / \augmentationideal{m}^{d+1}$ 
are $\weyl$-stable under the induced additive and multiplicative action respectively. 
Furthermore, the map
\[
    \phi^{(d)} : 
    \begin{array}[t]{ccc}
    \SymWeights_d                  &   \to     & 
    \augmentationideal{m}^d / \augmentationideal{m}^{d+1}, \\
    \prod\limits_{j=1}^d X_{i_j}   &   \mapsto & 
    \left[ \prod\limits_{j=1}^d (1-x_{i_j}^{-1}) \right]
    \end{array}
\]
is a $\weyl$-module isomorphism. 
\end{proposition}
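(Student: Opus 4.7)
The plan is to handle stability, well-definedness, equivariance, and bijectivity in that order. The ideals $\augmentationideal{a}$ and $\augmentationideal{m}$ are $\weyl$-stable: for $\augmentationideal{a}$ this is immediate from $s \cdot X_i = \sum_\ell s_{\ell,i}\,X_\ell$, and for $\augmentationideal{m}$ one computes $s \cdot (1-x_i^{-1}) = 1 - \prod_\ell x_\ell^{-s_{\ell,i}}$, noting that $1 - \prod_\ell x_\ell^{a_\ell}$ lies in $\augmentationideal{m}$ by a telescoping sum together with the identity $1 - x_\ell = -x_\ell(1-x_\ell^{-1})$ and induction on exponents. Powers of these ideals and their successive quotients thus inherit $\weyl$-module structures, and $\SymWeights_d \cong \augmentationideal{a}^d/\augmentationideal{a}^{d+1}$ via the standard identification for graded algebras.

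Next I would define $\phi^{(d)}$ via the algebra homomorphism $\phi: \SymWeights \to \CWeights$, $X_i \mapsto 1-x_i^{-1}$. Since $\phi(\augmentationideal{a}^d) \subseteq \augmentationideal{m}^d$, this descends to a well-defined linear map between the graded pieces that agrees with the proposed formula on monomial generators. The heart of the proof is the $\weyl$-equivariance of $\phi^{(d)}$. Writing $u_\ell := 1-x_\ell^{-1}$, so that $x_\ell^{-1} = 1 - u_\ell$ and $x_\ell^{-a} = (1-u_\ell)^a \equiv 1 - a\,u_\ell \pmod{\augmentationideal{m}^2}$ for any $a \in \Z$ by binomial/geometric expansion, I multiply to obtain
\[
    s \cdot u_i \ = \ 1 - \prod_\ell x_\ell^{-s_{\ell,i}} \ \equiv \ \sum_\ell s_{\ell,i}\, u_\ell \pmod{\augmentationideal{m}^2},
\]
which matches $\phi(s\cdot X_i)$. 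For a degree-$d$ monomial $X_{i_1}\cdots X_{i_d}$, I expand $s \cdot \prod_j u_{i_j}$ in the $u_\ell$'s using the above congruence; contributions of $u$-degree exceeding $d$ lie in $\augmentationideal{m}^{d+1}$ and vanish in the quotient, leaving precisely $\phi^{(d)}(s \cdot (X_{i_1}\cdots X_{i_d}))$.

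For bijectivity, I would invoke the fact that $u_1,\ldots,u_n$ generate $\augmentationideal{m}$ and form a regular sequence (equivalently, the completion of $\CWeights$ at $\augmentationideal{m}$ is the formal power series ring $\C[[u_1,\ldots,u_n]]$). Hence $\bigoplus_{d} \augmentationideal{m}^d / \augmentationideal{m}^{d+1}$ is the symmetric algebra on the $n$-dimensional space spanned by $[u_1],\ldots,[u_n]$, and $\phi^{(d)}$ identifies $\SymWeights_d$ with its $d$-th symmetric power; alternatively, a direct dimension count against the known rank $\binom{n+d-1}{d}$ on both sides suffices once surjectivity is established from the formula.

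The main obstacle I anticipate is the equivariance step. The algebra map $\phi$ is \emph{not} $\weyl$-equivariant on $\CWeights$ itself, because the multiplicative action on $x_i$ is genuinely nonlinear, whereas the additive action on $X_i$ is linear. Only after passing to the associated graded does this nonlinear action linearize modulo higher powers of $\augmentationideal{m}$, and tracking the congruence bookkeeping carefully, in particular verifying that all cross terms of $u$-degree strictly larger than $d$ collapse in $\augmentationideal{m}^{d+1}$, is the delicate part of the argument.
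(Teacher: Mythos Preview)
Your proposal is correct and follows essentially the same route as the paper: both arguments hinge on the linearization $s\cdot(1-x_i^{-1}) \equiv \sum_\ell s_{\ell,i}(1-x_\ell^{-1}) \pmod{\augmentationideal{m}^2}$, then multiply up to degree~$d$ and discard higher-order terms. Your treatment is slightly more explicit on stability and on bijectivity (invoking the regular-sequence/associated-graded identification rather than simply asserting that the degree-$d$ products of the $u_\ell$ form a basis, as the paper does), but the substance is the same.
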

\begin{proof}
Let $s\in\weyl$ with matrix entries $s_{\ell,i} \in \Z$. 
Then 
\[
    \phi^{(d)}\left(s\cdot\prod\limits_{j=1}^d X_{i_j}\right)
=   \phi^{(d)}\left(\prod\limits_{j=1}^d \sum\limits_{\ell=1}^n s_{\ell,i_j} \,X_\ell\right)
=   \left[\prod\limits_{j=1}^d \sum\limits_{\ell=1}^n s_{\ell,i_j} \,(1-x_\ell^{-1})\right].
\]
Since $(1-x^{-\alpha})\,(1-x^{-\beta}) = (1-x^{-\alpha})+(1-x^{-\beta})-(1-x^{-\alpha-\beta})$ for any $\alpha,\beta\in\Z^n$
and $ [1-x_\ell^{-1}] = [-(1-x_\ell)]$ in $\augmentationideal{m}/\augmentationideal{m}^2$, 
we have $[s_{\ell,i_j}(1-x_\ell^{-1})]=[ 1-x_\ell^{-s_{\ell,i_j}}]$ in $\augmentationideal{m}/\augmentationideal{m}^2$ 
and consequently 
\[
    \left[\prod\limits_{j=1}^d \sum\limits_{\ell=1}^n s_{\ell,i_j} \,(1-x_\ell^{-1})\right]
=   \left[\prod\limits_{j=1}^d \sum\limits_{\ell=1}^n \left(1-x_\ell^{-s_{\ell,i_j}}\right)\right]
\]
in $\augmentationideal{m}^{d}/\augmentationideal{m}^{d+1}$. 
Thus, 
\[
    \phi^{(d)}\left(s\cdot\prod\limits_{j=1}^d X_{i_j}\right)
=   \left[\prod\limits_{j=1}^d \sum\limits_{\ell=1}^n \left(1-x_\ell^{-s_{\ell,i_j}}\right)\right]
=   s \cdot \phi^{(d)}\left(\prod\limits_{j=1}^d X_{i_j}\right) . 
\]
We conclude that $\phi^{(d)}$ is well-defined and $\weyl$-equivariant. 
For bijectiveness, note that 
$\{ [ (1-x_{i_1}^{-1}) \cdots (1-x_{i_d}^{-1}) ]\,\vert\,1\leq i_1 \leq \ldots\leq i_d\leq n \}$ 
is a vector space basis for $\augmentationideal{m}^{d}/\augmentationideal{m}^{d+1}$. 
\end{proof}

In particular, 
\begin{equation}
    \phi^*
=   \bigoplus\limits_{d\in\N} \phi^{(d)}:\SymWeights \to \GradedWeights{m} 
\end{equation}
is a $\weyl$-equivariant graded algebra isomorphism.  

\section{Graded Transfer of Coinvariants}\label{sec_graded_transfer}

In this section, we show how to preserve group symmetry between additive and multiplicative coinvariants. 

Let $\weyl$ be a Weyl group of a crystallographic root system with fundamental weights $\fweight{i}$
and $\Weights=\Z\fweight{1}\oplus\ldots\oplus\Z\fweight{n}$ be the weight lattice. 
As fundamental invariants, we consider the orbit polynomials 
$\orb{i} := \frac{1}{\nops{\weyl}}\sum_{s\in\weyl} \mathfrak{e}^{s(\fweight{i})}$. 
Note that $\orb{i}-1\in\augmentationideal{m}$. 
By \Cref{lem:affine translation of invariant ring}, the set
$\{\orb{1}-1,\ldots,\orb{n}-1\}\subseteq\augmentationideal{m}\cap\CWeights^\weyl$ 
also generates $\CWeights^\weyl$ as an algebra and thus we may fix
$\hilbertideal{m} = (\augmentationideal{m} \cap \CWeights^\weyl)$ 
as the multiplicative Hilbert ideal in $\CWeights$.

\begin{proposition}\label{prop_GradedCoinvariantIso}
The ideal $\hilbertideal{m}^*:=(h^*\,\vert\,h\in\hilbertideal{m})$ in $\GradedWeights{m}$ is generated by the set $\{(\orb{1}-1)^*,\ldots,(\orb{n}-1)^*\}$. 
Equip $\GradedWeights{m} / \hilbertideal{m}^*$ with the induced multiplicative action. 
Then the map 
\[
    \CWeights_\weyl \to \GradedWeights{m} / \hilbertideal{m}^* ,\,
    f + \hilbertideal{m} \mapsto f^* + \hilbertideal{m}^*
\]
is a well-defined $\weyl$-module isomorphism. 
\end{proposition}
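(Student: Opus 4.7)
The plan is to prove the proposition in three steps: first identify the generators of $\hilbertideal{m}^*$, next check that the proposed map is well-defined and $\weyl$-equivariant, and finally establish bijectivity by a dimension count that leverages \Cref{prop_GradedEquiv}.

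For the generation statement, note that $\hilbertideal{m}=(\orb{1}-1,\ldots,\orb{n}-1)$, so any $h\in\hilbertideal{m}$ admits a presentation $h=\sum_{i=1}^n g_i\,(\orb{i}-1)$ with $g_i\in\CWeights$. The operation $f\mapsto f^*$ is $\C$-linear by construction and respects multiplication in the sense that $(fg)^*=f^*\,g^*$ in $\GradedWeights{m}$, since the filtration $\augmentationideal{m}^a\cdot\augmentationideal{m}^b\subseteq\augmentationideal{m}^{a+b}$ is multiplicative and $\GradedWeights{m}$ inherits this product structure. Hence $h^*=\sum_i g_i^*\,(\orb{i}-1)^*$ lies in the ideal generated by the $(\orb{i}-1)^*$; the reverse inclusion is trivial. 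Linearity also yields well-definedness of the map: if $f-f'\in\hilbertideal{m}$, then $f^*-(f')^*=(f-f')^*\in\hilbertideal{m}^*$. The filtration $\{\augmentationideal{m}^d\}$ is $\weyl$-stable because $\weyl$ preserves the augmentation ideal $\augmentationideal{m}$, so $*$ commutes with the $\weyl$-action, and the ideal $\hilbertideal{m}^*$ is $\weyl$-stable.

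For bijectivity I would proceed by a dimension count in two directions. On one hand, the $\weyl$-equivariant graded algebra isomorphism $\phi^*\colon\SymWeights\to\GradedWeights{m}$ from \Cref{prop_GradedEquiv} transports $\hilbertideal{m}^*$ to an ideal $J\subseteq\SymWeights$. Each $(\orb{i}-1)^*$ is $\weyl$-invariant (since $\orb{i}-1$ is) and has no degree-$0$ component (since $\orb{i}-1\in\augmentationideal{m}$), so its preimage in $\SymWeights$ is a $\weyl$-invariant element of positive degree. Thus $J\subseteq\hilbertideal{a}$, yielding $\dim\GradedWeights{m}/\hilbertideal{m}^*=\dim\SymWeights/J\geq\dim\SymWeights/\hilbertideal{a}=|\weyl|$ via \Cref{thm:Shephard-Todd-Chevalley}. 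On the other hand, the proposed map $\Psi\colon\CWeights_\weyl\to\GradedWeights{m}/\hilbertideal{m}^*$ is surjective: each basis element $[\prod_j(1-x_{i_j}^{-1})]$ of a graded piece of $\GradedWeights{m}$ is the $\Psi$-image of $\prod_j(1-x_{i_j}^{-1})\in\CWeights_\weyl$, and such monomials span $\GradedWeights{m}$ as a $\C$-vector space. Combined with $\dim_\C\CWeights_\weyl=|\weyl|$ from \Cref{thm:Regual_representation_multiplicative}, this gives $\dim\GradedWeights{m}/\hilbertideal{m}^*\leq|\weyl|$. Equality forces $\Psi$ to be bijective and in fact $J=\hilbertideal{a}$.

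The main obstacle will be the rigorous verification of multiplicativity $(fg)^*=f^*\,g^*$ underpinning the generation step, together with a clean interpretation of the ``associated graded element'' $f^*$ for general $f\in\CWeights$. This is cleanest when one passes to the $\augmentationideal{m}$-adic completion, where $f\mapsto f^*$ becomes a bona fide ring homomorphism; one must then verify that the generators $(\orb{i}-1)^*$, the ideal $\hilbertideal{m}^*$ they generate, and the dimension bounds are all meaningful in $\GradedWeights{m}$ (or its completion) as a graded $\weyl$-module.
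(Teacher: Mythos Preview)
Your proposal is correct, and the generation and well-definedness steps match the paper's argument essentially verbatim (both rely on $(fg)^*=f^*g^*$, which the paper uses without comment; your flagging of this point is appropriate).

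The genuine difference is in bijectivity. The paper simply writes down an inverse
\[
\tilde f+\hilbertideal{m}^*\;\longmapsto\; f_0+f_1+f_2+\ldots+\hilbertideal{m},
\]
exploiting that every $\tilde f\in\GradedWeights{m}$ is a finite sum $[f_0]+[f_1]+\ldots$ with $f_d\in\augmentationideal{m}^d$. Your route instead sandwiches the dimension: surjectivity plus \Cref{thm:Regual_representation_multiplicative} gives $\dim\GradedWeights{m}/\hilbertideal{m}^*\le|\weyl|$, while transporting along $\phi^*$ and observing $(\phi^*)^{-1}(\hilbertideal{m}^*)\subseteq\hilbertideal{a}$ gives the reverse inequality via \Cref{thm:Shephard-Todd-Chevalley}. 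The paper's inverse is more direct but leaves the well-definedness of that inverse (independence of representatives $f_d$ and compatibility with $\hilbertideal{m}^*$) to the reader; your dimension count avoids this and, as a bonus, establishes $(\phi^*)^{-1}(\hilbertideal{m}^*)=\hilbertideal{a}$, which is exactly the bridge exploited in the subsequent \Cref{thm_CoinvariantTranserAdditiveMultiplicative}. In that sense your argument is slightly longer but more self-contained and anticipates the next result.
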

\begin{proof}
To see that the map is well-defined, let $f-g\in\hilbertideal{m}$, that is, $f-g=\sum_i k_i\,(\orb{i}-1)$ for some $k_i\in\CWeights$. 
Then $(f-g)^*=\sum_i k_i^*\,(\orb{i}-1)^* \in \hilbertideal{m}^*$. 
Furthermore, since every element $\tilde{f}\in\GradedWeights{m}$ is of the form $\tilde{f}=[f_0]+[f_1]+[f_2]+\ldots$ for some $f_d\in\augmentationideal{m}^d$,
the inverse is given by $\tilde{f}+\hilbertideal{m}^* \mapsto f_0 + f_1 + f_2 + \ldots + \hilbertideal{m}$ and thus the map is bijective. 
Finally, equivariance follows from \Cref{prop_GradedEquiv}. 
\end{proof}

\begin{theorem}\label{thm_CoinvariantTranserAdditiveMultiplicative}
Let $\{H_\ell\,\vert\,1\leq \ell \leq \nops{\weyl}\} \subseteq \SymWeights$ be a 
normal set for $\SymWeights_\weyl$ with $H_\ell$ homogeneous of degree $d_\ell$. Then 
\[
    \{\phi^{(d_\ell)}(H_\ell)\,\vert\,1\leq \ell \leq \nops{\weyl}\} 
\]
is a normal set for $\GradedWeights{m} / \hilbertideal{m}^*$. 
\end{theorem}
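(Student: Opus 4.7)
The plan is to use the $\weyl$-equivariant graded algebra isomorphism $\phi^* = \bigoplus_{d \in \N} \phi^{(d)} : \SymWeights \to \GradedWeights{m}$ from \Cref{prop_GradedEquiv} to transport the normal set $\{H_\ell\}$ across. Since each $H_\ell$ is homogeneous of degree $d_\ell$, we have $\phi^*(H_\ell) = \phi^{(d_\ell)}(H_\ell)$, so it suffices to show that $\phi^*$ descends to a vector space isomorphism $\SymWeights/\hilbertideal{a} \to \GradedWeights{m}/\hilbertideal{m}^*$: the given basis $\{[H_\ell]\}$ on the left is then sent to a basis on the right, which is precisely the normal set property. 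The whole problem therefore reduces to establishing the ideal equality $\phi^*(\hilbertideal{a}) = \hilbertideal{m}^*$ in $\GradedWeights{m}$.

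First I would identify $\phi^*(\hilbertideal{a})$ intrinsically. By \Cref{thm:Shephard-Todd-Chevalley} the fundamental additive invariants $\fundinv{a}$ can be chosen homogeneous of positive degree, so $\hilbertideal{a} = \SymWeights \cdot \SymWeights^\weyl_+$, where the subscript $+$ denotes the subspace of strictly positive-degree homogeneous elements. Because $\phi^*$ is $\weyl$-equivariant, grading-preserving, and an algebra isomorphism, it restricts to a graded isomorphism of invariant subrings $\SymWeights^\weyl \to \GradedWeights{m}^\weyl$, and therefore
\[
    \phi^*(\hilbertideal{a})
=   \GradedWeights{m} \cdot \phi^*(\SymWeights^\weyl_+)
=   \GradedWeights{m} \cdot \GradedWeights{m}^\weyl_+ .
\]
For the inclusion $\hilbertideal{m}^* \subseteq \phi^*(\hilbertideal{a})$, each $\orb{i}-1$ lies in $\augmentationideal{m} \cap \CWeights^\weyl$, so its associated graded $(\orb{i}-1)^*$ is a nonzero homogeneous element of $\GradedWeights{m}^\weyl_+$ and hence of $\phi^*(\hilbertideal{a})$.

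For the reverse inclusion, I would sidestep any explicit generator analysis of $\GradedWeights{m}^\weyl$ by a dimension count. The bijectivity of $\phi^*$ together with \Cref{thm:Shephard-Todd-Chevalley} gives
\[
    \dim_\C \GradedWeights{m}/\phi^*(\hilbertideal{a}) = \dim_\C \SymWeights/\hilbertideal{a} = \nops{\weyl} ,
\]
while \Cref{prop_GradedCoinvariantIso} combined with \Cref{thm:Regual_representation_multiplicative} gives $\dim_\C \GradedWeights{m}/\hilbertideal{m}^* = \dim_\C \CWeights_\weyl = \nops{\weyl}$. Since $\hilbertideal{m}^* \subseteq \phi^*(\hilbertideal{a})$ and the two finite-dimensional quotients have equal dimension, the inclusion must be an equality, and the induced map $\bar{\phi}^* : \SymWeights/\hilbertideal{a} \to \GradedWeights{m}/\hilbertideal{m}^*$ is the desired vector space isomorphism.

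The main obstacle is the equality $\phi^*(\hilbertideal{a}) = \hilbertideal{m}^*$ and in particular its reverse inclusion. A direct algebraic proof would require showing that the leading-degree terms $(\orb{i}-1)^*$ generate $\GradedWeights{m}^\weyl$ as a $\C$-algebra, which in turn demands control over the filtration degrees of each $\orb{i}-1$ and the algebraic independence of their leading terms inside the associated graded ring. The dimension argument above bypasses this difficulty by appealing abstractly to \Cref{prop_GradedCoinvariantIso}, which pins down $\dim \GradedWeights{m}/\hilbertideal{m}^*$ without any explicit description of the invariant subring of $\GradedWeights{m}$.
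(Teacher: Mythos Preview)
Your proposal is correct and uses the same essential ingredients as the paper: the inclusion $\hilbertideal{m}^* \subseteq \phi^*(\hilbertideal{a})$ coming from the $\weyl$-equivariance of $\phi^*$, together with the dimension count $\dim \GradedWeights{m}/\hilbertideal{m}^* = \nops{\weyl}$ from \Cref{prop_GradedCoinvariantIso} and \Cref{thm:Regual_representation_multiplicative}. The organization differs slightly---the paper works directly with linear independence, decomposing a hypothetical relation $\sum_\ell \lambda_\ell\,\phi^{(d_\ell)}(H_\ell)\in\hilbertideal{m}^*$ by graded degree and pulling each homogeneous piece back through $(\phi^{(d)})^{-1}$ into $\hilbertideal{a}\cap\SymWeights_d$, whereas you first upgrade the inclusion to the ideal equality $\phi^*(\hilbertideal{a})=\hilbertideal{m}^*$ via the two-sided dimension count and then transfer the basis in one step---but the underlying argument is the same.
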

\begin{proof}
Thanks to \Cref{thm:Regual_representation_multiplicative}, 
we only need to show that the $\phi^{(d_\ell)}(H_\ell) \in \GradedWeights{m}$ 
are linearly independent modulo $\hilbertideal{m}^*$. 
We may assume without loss of generality that 
\[
    d_1=\ldots=d_{k_1} < d_{k_1+1} = \ldots = d_{k_2} < d_{k_2 + 1} = \ldots = d_{k_3} < \ldots
\]
with 
$k_0=0$ and $k_r = \nops{\weyl}$. 
Let $\lambda_\ell\in \C$ be scalars such that 
\begin{align*}
    \hilbertideal{m}^*
\ni&\sum\limits_{\ell=1}^{\nops{\weyl}} \lambda_\ell\,\phi^{(d_\ell)}(H_\ell)
=   \sum\limits_{j=1}^r \sum\limits_{\ell=k_{j-1}+1}^{k_j} \lambda_\ell \, \phi^{(d_{k_j})}(H_\ell) \\
=&  \sum\limits_{j=1}^r \phi^{(d_{k_j})}\left( \sum\limits_{\ell=k_{j-1}+1}^{k_j} \lambda_\ell \, H_\ell \right)
\in \bigoplus\limits_{j=1}^r \augmentationideal{m}^{d_{k_j}} / \augmentationideal{m}^{d_{k_j}+1} .
\end{align*}
This decomposition implies
\[
    \phi^{(d_{k_j})}\left( \sum\limits_{\ell=k_{j-1}+1}^{k_j} \lambda_\ell \, H_\ell \right)
\in \hilbertideal{m}^* \cap \augmentationideal{m}^{d_{k_j}} / \augmentationideal{m}^{d_{k_j}+1}
\]
and thus, using the $\weyl$-equivariance from \Cref{prop_GradedEquiv}, 
\[
    \sum\limits_{\ell=k_{j-1}+1}^{k_j} \lambda_\ell \, H_\ell
\in \hilbertideal{a} \cap \SymWeights_{d_{k_j}}. 
\]
In particular, since the $H_\ell$ form a normal set for $\SymWeights/\hilbertideal{a}$,
their only linear combination in $\hilbertideal{a}$ is $0$, which finally yields $\lambda_\ell=0$. 
\end{proof}

\begin{algorithm}
\caption{Coinvariant Transfer}\label{alg_CoinvariantBasisTransfer}
\KwData{$\coinvariantbasis{a} \subseteq \C[X_1,\ldots,X_n]$ 
a normal set for $\SymWeights_\weyl$}
\KwResult{$\coinvariantbasis{m} \subseteq \C[x_1^{\pm 1},\ldots,x_n^{\pm 1}]$ 
a normal set for $\CWeights_\weyl$}
$K = \emptyset$\;
$L = \coinvariantbasis{a}$\;
$\augmentationideal{m}=(1-x_{i}^{-1}\,\vert\,1\leq i\leq n)$\;
\While{$L \neq \emptyset$}{
    \textbf{choose} $H\in L$\;
    $d=\deg(H)$\;
    $h=H\vert_{X_i\mapsto 1-x_{i}^{-1}}$\;
    $h=\mathrm{NF}(h,\augmentationideal{m}^{d+1})$ \Comment{normal form}\;
    $K = K \cup \{h\}$\;
    $L = L \setminus \{H\}$\;
}
\textbf{return} $\coinvariantbasis{m}=K$\;
\end{algorithm}

Since the finite dimensional vector spaces $\SymWeights_d$ and $\augmentationideal{m}^d/\augmentationideal{m}^{d+1}$ 
are isomorphic as $\weyl$-modules by the map $\phi^{(d)}$ from \Cref{prop_GradedEquiv}, 
they have coinciding isotypic decompositions
\begin{equation}
        \SymWeights_d
\cong   \augmentationideal{m}^d/\augmentationideal{m}^{d+1}
\cong   \bigoplus\limits_{i=1}^{\small \mbox{\#irrep.}} m_i^{(d)} \cdot W_i ,
\end{equation}
where $\mbox{\#irrep.}$ is the number of irreducible $\weyl$-modules $W_i$, which appear with multiplicity $m_i^{(d)}$ \cite{serre77}. 
A vector space basis for (a subspace of) 
\begin{equation}
    \SymWeights = \bigoplus\limits_{d\in\N} \SymWeights_d
    \quad \mbox{or} \quad
    \GradedWeights{m} = \bigoplus\limits_{d\in\N} \augmentationideal{m}^d/\augmentationideal{m}^{d+1}
\end{equation}
respecting the isotypic decomposition of the homogeneous components is called \textbf{symmetry adapted} 
(see \cite{DerksenKemper,gatermann2004symmetry} for general finite groups). 

\begin{corollary}
Assume that the normal set $\{H_\ell\,\vert\,1\leq \ell \leq \nops{\weyl}\}$ for $\SymWeights_\weyl$
in \Cref{thm_CoinvariantTranserAdditiveMultiplicative} is symmetry adapted. 
Then the same holds for the normal set $\{\phi^{(d_\ell)}(H_\ell)\,\vert\,1\leq \ell \leq \nops{\weyl}\}$ 
for $\GradedWeights{m}/\hilbertideal{m}^*$. 
\end{corollary}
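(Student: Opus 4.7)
The plan is to reduce this to a statement about $\weyl$-equivariant isomorphisms between graded pieces, since being symmetry-adapted is, by definition, a property that is preserved under any $\weyl$-equivariant isomorphism that respects the grading.

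First, I would partition the normal set by homogeneous degree. For each $d \in \N$, set $\Lambda_d := \{\ell : d_\ell = d\}$ and consider the subset $\{H_\ell : \ell \in \Lambda_d\} \subseteq \SymWeights_d$. Since $\hilbertideal{a}$ is generated by homogeneous invariants, the quotient $\SymWeights_\weyl$ inherits a grading, and by hypothesis the classes of $\{H_\ell : \ell \in \Lambda_d\}$ form a symmetry-adapted basis of the degree-$d$ piece $(\SymWeights_\weyl)_d$, that is, a basis respecting the isotypic decomposition of $(\SymWeights_\weyl)_d$ as a $\weyl$-module. Likewise, $\hilbertideal{m}^*$ is generated by the homogeneous elements $(\orb{i}-1)^*$ in $\GradedWeights{m}$, so $\GradedWeights{m}/\hilbertideal{m}^*$ is naturally graded.

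Next, I would argue that $\phi^{(d)}$ descends to a $\weyl$-equivariant isomorphism
\[
    \overline{\phi}^{(d)} : (\SymWeights_\weyl)_d \to \left(\GradedWeights{m}/\hilbertideal{m}^*\right)_d .
\]
The map $\phi^{(d)} : \SymWeights_d \to \augmentationideal{m}^d/\augmentationideal{m}^{d+1}$ is $\weyl$-equivariant and bijective by \Cref{prop_GradedEquiv}. To descend it, one only needs that $\phi^{(d)}(\hilbertideal{a} \cap \SymWeights_d) = \hilbertideal{m}^* \cap (\augmentationideal{m}^d/\augmentationideal{m}^{d+1})$. The inclusion ``$\subseteq$'' is immediate from the fact that $\phi^{(d_\ell)}$ is an algebra-graded piece of $\phi^*$, which sends the generators of $\hilbertideal{a}$ (after normalization) into $\hilbertideal{m}^*$. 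Equality then follows by a dimension count: \Cref{thm_CoinvariantTranserAdditiveMultiplicative} already shows that $\{\phi^{(d_\ell)}(H_\ell)\}$ is a normal set for $\GradedWeights{m}/\hilbertideal{m}^*$ of cardinality $\nops{\weyl} = \dim_\C \SymWeights_\weyl$, so the induced surjection has trivial kernel in each degree.

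Finally, I would conclude by observing that any $\weyl$-equivariant isomorphism of finite-dimensional $\weyl$-modules sends each isotypic component $W_i^{m_i}$ bijectively onto the corresponding isotypic component on the other side. Hence $\overline{\phi}^{(d)}$ carries the symmetry-adapted basis $\{[H_\ell] : \ell \in \Lambda_d\}$ of $(\SymWeights_\weyl)_d$ to a symmetry-adapted basis $\{[\phi^{(d)}(H_\ell)] : \ell \in \Lambda_d\}$ of $(\GradedWeights{m}/\hilbertideal{m}^*)_d$. Taking the union over all $d$ yields the desired symmetry-adapted normal set. The main obstacle is the verification of the ideal correspondence in the second step; this however is not a new computation, but rather a repackaging of the linear-independence argument already carried out in the proof of \Cref{thm_CoinvariantTranserAdditiveMultiplicative}.
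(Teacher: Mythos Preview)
Your argument is correct, and the paper gives no explicit proof of this corollary; it is stated as an immediate consequence of \Cref{prop_GradedEquiv} and \Cref{thm_CoinvariantTranserAdditiveMultiplicative}. The implicit reasoning is simply that each $H_\ell$ lies in some isotypic component of $\SymWeights_{d_\ell}$, the $\weyl$-equivariant isomorphism $\phi^{(d_\ell)}$ carries isotypic components of $\SymWeights_{d_\ell}$ bijectively onto those of $\augmentationideal{m}^{d_\ell}/\augmentationideal{m}^{d_\ell+1}$, and the images are already known to form a normal set.

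Your route differs in one respect: you descend $\phi^{(d)}$ to the graded pieces of the \emph{quotients} and verify the ideal correspondence $\phi^{(d)}(\hilbertideal{a}\cap\SymWeights_d)=\hilbertideal{m}^*\cap(\augmentationideal{m}^d/\augmentationideal{m}^{d+1})$. This is sound, and your dimension-count justification via \Cref{thm_CoinvariantTranserAdditiveMultiplicative} works. But note that the paper's definition of ``symmetry adapted'' refers to the isotypic decomposition of the homogeneous components $\SymWeights_d$ and $\augmentationideal{m}^d/\augmentationideal{m}^{d+1}$ of the \emph{ambient} graded algebras, not of the coinvariant quotients. Under that reading, the descent step is unnecessary: equivariance of $\phi^{(d)}$ on the ambient pieces already suffices. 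Your extra step buys you the stronger statement that the basis is symmetry-adapted for the quotient's own isotypic decomposition, which is arguably what one actually wants in applications, so it is not wasted effort.
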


With \Cref{prop_GradedCoinvariantIso}, we have
\begin{equation}
        \CWeights 
\cong   \CWeights^\weyl \otimes_\C \CWeights_\weyl
\cong   \CWeights^\weyl \otimes_\C \GradedWeights{m} / \hilbertideal{m}^* 
\end{equation}
and can therefore derive a symmetry adapted basis for $\CWeights$ by multiplication with invariants in $\hilbertideal{m}$ 
(see \cite{debus_riener_2023} for additive invariants). 

\begin{example}\label{example_coinvariant_transfer_C2}
Recall from  {\Cref{example_c2_fundamental_invariants}}, 
that the weight lattice $\Weights=\Z^2$ is generated by 
the fundamental weights $\fweight{1},\fweight{2}$ defining variables $X_i,x_i$ 
and by the standard basis $e_1,e_2$ defining variables $Y_i,y_i$ 
for $\SymWeights$ and $\CWeights$ respectively. 

For a symmetry adapted basis of the additive coinvariant space 
$\SymWeights_\weyl$ of $\RootC[2]$ with coordinates $Y_1,Y_2$, 
we choose higher Specht polynomials \cite{morita1998higher}. 
From this construction, 
we display in {\Cref{table:C_2 additive coinvariants}} symmetry adapted bases 
for both the additive and multiplicative coinvariant spaces. 
Although they are not a monomial basis such as the normal set in {\Cref{example:C_2 normal set}}, 
we can track the isotypic decomposition. 

\begin{center}
\begin{table}[h!] 
    \centering
    \renewcommand{\arraystretch}{1.5}
		\scalebox{0.72}{
    \begin{tabular}{|c|c|c|}
        \hline
        \textbf{Irrep} & \textbf{Dim} & \textbf{Basis Elements}  \\
        \hline
        $ \mathbb{S}^{((2),(0))} $ & 1 & $ 1 $ \\
        \hline
        $ \mathbb{S}^{((1),(1))} $ & 2 & $ Y_1, Y_2 $ \\
                        &   &   $1 - 1/y_1, 1/y_1 - 1/(y_1\,y_2) $ \\
                        &   &   $X_1, -X_1 + 2\,X_2 $ \\
                        &   &   $1 - 1/x_1, 1/x_1 - 1/x_2 $ \\
                        &   & $ Y_2^2 \cdot Y_1, \, Y_1^2 \cdot Y_2 $ \\
                        &   &   $3/y_2 - 1/y_1^2 + 1/(y_1^2\,y_2) + y_1\,y_2 + 4/y_1 - 3/(y_1\,y_2) - 1, -1/y_2 + 1/y_1^2 - 1/(y_1^2\,y_2) - 2/y_1 + 2/(y_1\,y_2) + 1 $ \\
                        &   &   $X_1^3 - 4\,X_1^2\,X_2 + 4\,X_1\,X_2^2, -X_1^3 + 2\,X_1^2\,X_2 $ \\
                        &   &   $3\,x_1/x_2 - 1/x_1^2 + 1/(x_2\,x_1) + x_2 + 4/x_1 - 3/x_2 - 1, -x_1/x_2 + 1/x_1^2 - 1/(x_2\,x_1) - 2/x_1 + 2/x_2 + 1 $ \\
        \hline
        $ \mathbb{S}^{((0),(2))} $ & 1 & $ Y_1 \cdot Y_2 $ \\
                        &   &   $1/(y_1^2\,y_2) - y_1 - 2/y_1 - 1/(y_1\,y_2) + 3 $ \\
                        &   &   $-X_1^2 + 2\,X_1\,X_2 $ \\
                        &   &   $1/(x_2\,x_1) - x_1 - 2/x_1 - 1/x_2 + 3 $ \\
        \hline
        $ \mathbb{S}^{((1,1),(0))} $ & 1 & $ Y_1^2 - Y_2^2 $ \\
                        &   &   $2/(y_1^2\,y_2) - y_1\,y_2 - 2/y_1 - 3/(y_1\,y_2) + 4 $ \\
                        &   &   $4\,X_1\,X_2 - 4\,X_2^2 $ \\
                        &   &   $2/(x_2\,x_1) - x_2 - 2/x_1 - 3/x_2 + 4 $ \\
        \hline
        $ \mathbb{S}^{((0),(1,1))} $ & 1 & $ (Y_1^2 - Y_2^2) \cdot Y_1 \cdot Y_2 $ \\
                        &   &   $6/y_2 + 4/y_1^2 + 14/(y_1^2\,y_2) - 10\,y_1 - 10\,y_1\,y_2 - 18/y_1 - 10/(y_1\,y_2) + 24 $ \\
                        &   &   $-4\,X_1^3\,X_2 + 12\,X_1^2\,X_2^2 - 8\,X_1\,X_2^3 $ \\
                        &   &   $6\,x_1/x_2 + 4/x_1^2 + 14/(x_2\,x_1) - 10\,x_1 - 10\,x_2 - 18/x_1 - 10/x_2 + 24 $ \\
        \hline
    \end{tabular} }
    \caption{
    %Additive and multiplicative coinvariants for the Weyl group $\weyl\cong\mathfrak{S}_2 \wr \{\pm 1\}$ of $\RootC[2]$ in the standard basis ($Y_1=e_1,Y_2=e_2$) and in the basis of fundamental weights ($X_1=\fweight{1}=e_1,X_2=\fweight{2}=(e_1+e_2)/2$). 
    In case of $\RootC[2]$, applying \Cref{alg_CoinvariantBasisTransfer} 
    to a symmetry adapted basis for $\SymWeights_\weyl$ (consisting of higher Specht polynomials in $Y_1=e_1,Y_2=e_2$)
    yields a symmetry adapted basis for $\CWeights_\weyl$ (in $y_1,y_2$). 
    The change of coordinates to $X_1=\fweight{1}=e_1,X_2=\fweight{2}=(e_1+e_2)/2$ 
    yields the basis in $x_1,x_2$. 
    }
    \label{table:C_2 additive coinvariants}
\end{table}
\end{center}

\end{example}

%%%%%%%%

\section{Homomorphisms for types $\RootA$ and $\RootC$}\label{sec:homomorphisms}

Although $\CWeights$ and $\SymWeights$ are not isomorphic as rings, 
one can find homomorphisms of $\weyl$-modules for types $\RootA$ and $\RootC$. 

The reason why we find the linear maps only for these types is that the change of basis from 
fundamental weights $x_i$ to standard $y_i$ over the Laurent polynomials is bijective only for types $\RootA$ and $\RootC$ 
among the infinite families of Weyl groups, see \cite[Lem.~5.2,~6.2]{metzlaff2024}. 

\subsection{Type $\RootA$}

The symmetric group $\mathfrak{S}_n$ acts on the affine algebraic varieties
\begin{align*}
                    &   V:=\{Y=(Y_1,\ldots,Y_n)\in\C^n\,\vert\, Y_1 + \ldots + Y_n = 0\} \\
\mbox{and} \quad    &   W:=\{y=(y_1,\ldots,y_n)\in\C^n\,\vert\, y_1 \cdots y_n - 1 = 0\}
\end{align*} 
by permutation of coordinates. 
This defines a representation $\rho:\,\mathfrak{S}_n \to \mathrm{GL}_n(\C)$ with 
induced actions on the coordinate rings 
$\C[V] \cong \C[Y_1,\ldots,Y_n]/(Y_1+\ldots+Y_n)$ and 
$\C[W] \cong \C[y_1,\ldots,y_n]/(y_1\cdots y_n - 1)$. 

Recall from \cite[Pl. I]{bourbaki456} that $\weyl = \mathfrak{S}_n$ is the Weyl group of the root system $\RootA$. 
Denote by $\fweight{1},\ldots,\fweight{n-1}$ the fundamental weights of the weight lattice $\Weights$. 
This defines a representation $\pi:\,\mathfrak{S}_n \to \mathrm{GL}_{n-1}(\Z)$ 
with an additive action on $\SymWeights \cong \C[X_1,\ldots,X_{n-1}]$ 
and a multiplicative action on $\CWeights \cong \C[x_{1}^{\pm 1},\ldots,x_{n-1}^{\pm 1}]$. 
Furthermore, we have
\begin{equation}
            \weyl(\fweight{1}) 
=           \{\fweight{1},\fweight{2}-\fweight{1},\ldots,\fweight{n-1}-\fweight{n-2},-\fweight{n-1}\} ,
\end{equation}
which leads to the following statement (see also \cite[\S 5]{metzlaff2024}). 

\begin{lemma}\label{lem: equiv_A}
The maps 
\[
  \C[V] \to \SymWeights,\, 
  Y_1\mapsto X_1,\,
  Y_2\mapsto X_2 - X_{1},\,
  \ldots,\,
  Y_{n-1}\mapsto X_{n-1} - X_{n-2},\,
  Y_n\mapsto -X_{n-1}
\]
and
\[
  \C[W] \to \CWeights,\,
  y_1\mapsto x_1,\,
  y_2\mapsto x_2\,x_{1}^{-1},\,
  \ldots,\,
  y_{n-1}\mapsto x_{n-1}\,x_{n-2}^{-1},\,
  y_n\mapsto x_{n-1}^{-1} 
\]
are well-defined $\rho$-$\pi$-equivariant algebra isomorphisms. 
\end{lemma}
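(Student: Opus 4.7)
The plan is to treat the additive and multiplicative maps in parallel, using the canonical realization of the type $\RootA$ weight lattice inside $\Z^n$. The first step is to verify that each map is a well-defined algebra homomorphism by checking that the relation defining the source is sent to zero (resp.\ one) in the target. In the additive case the telescoping identity
\[
X_1 + (X_2 - X_1) + \ldots + (X_{n-1} - X_{n-2}) + (-X_{n-1}) = 0
\]
shows that the image of $Y_1 + \ldots + Y_n$ vanishes in $\SymWeights$; analogously, the multiplicative map sends $y_1 \cdots y_n$ to a telescoping product equal to $1$ in $\CWeights$. Thus both homomorphisms descend through the defining quotients of $\C[V]$ and $\C[W]$.

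Second, I would exhibit explicit two-sided inverses
\[
\SymWeights \to \C[V],\, X_j \mapsto Y_1 + \ldots + Y_j
\tbox{and}
\CWeights \to \C[W],\, x_j \mapsto y_1 \cdots y_j .
\]
These are well-defined on the (Laurent) polynomial sources, and composition with the given maps is the identity on generators: $Y_j$ is recovered as $(Y_1 + \ldots + Y_j) - (Y_1 + \ldots + Y_{j-1})$ with the conventions $X_0 = X_n := 0$, using $Y_1 + \ldots + Y_n = 0$ to handle $j = n$, and the multiplicative analogue is a telescoping quotient. This shows that the given maps are algebra isomorphisms.

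Third, for $\rho$-$\pi$-equivariance, I would invoke the standard realization $\Weights = \Z^n / \Z(e_1 + \ldots + e_n)$ with $\fweight{j} = [e_1 + \ldots + e_j]$, in which $\symmgroup$ acts on $\Weights$ by permutation of the $e_i$, while it acts on $\C[V]$ by permutation of the $Y_i$. The identification $\fweight{j} \leftrightarrow Y_1 + \ldots + Y_j$ appearing in step two is then nothing but the natural pairing $\Weights \times V \to \C$, which is $\symmgroup$-equivariant by construction; hence both actions reduce to permuting summands and equivariance on generators is immediate. The multiplicative case follows by exponentiating the same pairing and pairs up with the fact that $\symmgroup$ acts on $W \subseteq (\C \setminus \{0\})^n$ by coordinate permutation. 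The subtlest step is likely the equivariance argument, since it requires tracking the identification between the fundamental-weight coordinates $X_j$ (or $x_j$) and the $e_i$-realization of the lattice; once that identification is adopted, each verification becomes a routine check on generators.
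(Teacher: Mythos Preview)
Your proposal is correct and follows essentially the same approach as the paper: the paper's proof simply records the identification $X_i=\fweight{i}$, $Y_i=\fweight{i}-\fweight{i-1}$ (and the multiplicative analogue), notes that the $Y_i$ form a single $\weyl$-orbit so that equivariance is immediate, and writes down the same inverse maps $X_i\mapsto Y_1+\ldots+Y_i$, $x_i\mapsto y_1\cdots y_i$ that you give. Your version is more explicit about the telescoping checks and the lattice realization, but the content is the same.
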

We note that the isomorphism $\C[W] \to \CWeights$ was already observed in \cite[\S 9]{garsia1984}.
\begin{proof}

The choice of coordinates 
$X_i=\fweight{i}, Y_i=\fweight{i}-\fweight{i-1},x_i=\mathfrak{e}^{\fweight{i}},y_i=\mathfrak{e}^{\fweight{i}-\fweight{i-1}}$, 
by which the $Y_i$ and $y_i$ form $\weyl$-orbits of length $n$, 
implies that the maps are well-defined and $\rho$-$\pi$-equivariant. 
For bijectiveness, note that the inverse maps are given by  $X_i\mapsto Y_1+\ldots+Y_i$ and $x_i \mapsto y_1\cdots y_i$. 
\end{proof}

Hence, the induced actions on $\C[V]$ and $\C[W]$ can be identified with the additive and multiplicative action respectively. 

\begin{example}
The Weyl group $\weyl$ of $\RootA[2]$ is generated by $s_1,s_2$ with
\[
    \rho(s_1) = \begin{pmatrix} 0&1&0\\1&0&0\\0&0&1 \end{pmatrix},\,
    \rho(s_2) = \begin{pmatrix} 1&0&0\\0&0&1\\0&1&0 \end{pmatrix} 
\]
and
\[
    \pi (s_1) = \begin{pmatrix} -1&0\\1&1 \end{pmatrix},\,
    \pi (s_2) = \begin{pmatrix} 1&1\\0&-1 \end{pmatrix} .
\]
\end{example}

We now compute a coinvariant basis for $\CWeights_\weyl$. 
The fundamental theorem of symmetric polynomials states that the invariant ring 
$\C[y_1,\ldots,y_n]^{\rho(\mathfrak{S}_n)}$ 
is, as an algebra, generated by the elementary symmetric polynomials
\begin{equation}
\sigma_i(y) = \sum_{\substack{J \subseteq \{1 ,\ldots, n\} \\ \mbox{with } \vert J \vert = i }} \, \prod_{j\in J} y_j.
\end{equation}
In particular, $\C[W]^\weyl \cong \C[\sigma_1(y),\ldots,\sigma_{n-1}(y)]/(\sigma_n(y)-1)$. 

On the other hand, the multiplicative invariant ring 
$\CWeights^\weyl \cong \C[x_{1}^{\pm 1},\ldots,x_{n-1}^{\pm 1}]^{\pi(\mathfrak{S}_n)}$ 
is generated by orbit polynomials $\orb{i}(x)$. 
Recall from \Cref{lem: equiv_A} that
\[
    \Psi_\mathrm{A}: \C[W]\to\CWeights, \, y_1\mapsto x_1,y_2\mapsto x_2\,x_{1}^{-1},\ldots,y_{n-1}\mapsto x_{n-1}\,x_{n-2}^{-1},y_n\mapsto x_{n-1}^{-1} \label{eq:map1} 
\] 
is an equivariant algebra isomorphism. 

\begin{proposition}
\label{cor:A_coinvariant_algebra_isomorphism}
The map 
\[
 \C[y]/(\sigma_i(y)\,\vert\,1\leq i\leq n) \to \C[x^\pm]/(\orb{i}\,\vert\,1\leq i\leq n-1)   
\]
that takes $h(y)$ to $\Psi_\mathrm{A} (h(y))$ is a $\weyl$-module isomorphism. 
\end{proposition}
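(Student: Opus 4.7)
The plan is to derive this isomorphism from \Cref{lem: equiv_A}, which already gives $\Psi$ as a $\weyl$-equivariant algebra isomorphism $\C[W] \to \CWeights$. The task reduces to showing that $\Psi$ carries the additive Hilbert-like ideal onto the multiplicative one so it descends to the stated quotients; equivariance and bijectivity then follow automatically.

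First, I would compute $\Psi(\sigma_i)$ for each $i$. Since $\Psi(y_j) = \mathfrak{e}^{\fweight{j}-\fweight{j-1}}$ (with the convention $\fweight{0} = \fweight{n} = 0$), the $n$ images run precisely through the orbit $\weyl(\fweight{1})$, and each product $y_{j_1}\cdots y_{j_i}$ with $\{j_1 < \cdots < j_i\} \subseteq \{1,\ldots,n\}$ is sent bijectively to a distinct element of $\weyl(\fweight{i})$ (this is the standard action of $\mathfrak{S}_n$ on $i$-subsets of $[n]$). Summing over the $\binom{n}{i}$ subsets therefore yields the full orbit sum, and after accounting for the normalization $\orb{i} = \frac{1}{\nops{\weyl}}\sum_{s}\mathfrak{e}^{s(\fweight{i})}$ one obtains $\Psi(\sigma_i) = \binom{n}{i}\,\orb{i}$ for $1\leq i\leq n-1$. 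A direct telescoping computation gives $\Psi(\sigma_n) = \Psi(y_1\cdots y_n) = x_1 \cdot (x_2 x_1^{-1}) \cdots x_{n-1}^{-1} = 1$, which is compatible with the relation $\sigma_n \equiv 1$ in $\C[W]$.

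Second, I would use these identities to match ideals. Since the scalars $\binom{n}{i}$ are nonzero, $\Psi$ sends the ideal $(\sigma_1,\ldots,\sigma_{n-1})$ in $\C[W]$ bijectively onto $(\orb{1},\ldots,\orb{n-1})$ in $\CWeights$. Read inside $\C[W]$ from the outset, the generator $\sigma_n$ of the LHS ideal is absorbed by the ambient relation $y_1\cdots y_n - 1$, so the quotient $\C[y]/(\sigma_i : 1\le i\le n)$ coincides with $\C[W]/(\sigma_1,\ldots,\sigma_{n-1})$. The map induced by $\Psi$ on these quotients is then an algebra isomorphism; by the $\rho$-$\pi$-equivariance of \Cref{lem: equiv_A} it is in particular a $\weyl$-module isomorphism, as claimed.

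I expect the main obstacle to be clarifying the well-definedness of the map, since, read naively in $\C[y_1,\ldots,y_n]$, the generator $\sigma_n$ of the LHS ideal has image $\Psi(\sigma_n) = 1$, a unit not contained in $(\orb{1},\ldots,\orb{n-1})$. The resolution is to interpret $\Psi$ through its natural domain $\C[W] = \C[y]/(y_1\cdots y_n - 1)$, where $\sigma_n \equiv 1$ is built in. With this identification both sides have common $\C$-dimension $\nops{\weyl} = n!$: the LHS by \Cref{thm:Shephard-Todd-Chevalley} (or equivalently by \Cref{thm:Bourbaki} applied to $\C[W]$), and the RHS by \Cref{lem: quotient size} at the shift $c=0$ (valid thanks to \Cref{lem:affine translation of invariant ring}), so the equivariant map is an isomorphism.
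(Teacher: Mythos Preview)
Your approach matches the paper's: both reduce to the fact that $\Psi(\sigma_i)$ is a nonzero scalar multiple of $\orb{i}$ for $1\le i\le n-1$ (the paper cites \cite[Prop.~5.1]{metzlaff2024} for this, whereas you compute it directly via the bijection between $i$-subsets and $\weyl(\fweight{i})$), and both handle the $\sigma_n$ versus $\sigma_n-1$ discrepancy by invoking the shift argument of \Cref{lem: quotient size}. One wording to tighten: the sentence claiming that $\C[y]/(\sigma_i:1\le i\le n)$ ``coincides'' with $\C[W]/(\sigma_1,\ldots,\sigma_{n-1})$ is not literally true as rings (in the first $\sigma_n=0$, in the second $\sigma_n=1$); what you need, and what your final paragraph correctly supplies, is that they are isomorphic as $\weyl$-modules via a common normal set, exactly as in \Cref{lem: quotient size}.
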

\begin{proof}
By \cite[Prop.~5.1]{metzlaff2024}, $\Psi_\mathrm{A}(\sigma_i(y))$ is equal to $\theta_i(x)$ up to a scalar and thus
\begin{align*}
        \C[W]_\weyl 
&\cong   \C[y]/(\sigma_1(y),\ldots,\sigma_{n-1}(y),\sigma_n(y)-1) \\
&\cong   \C[x^\pm]/(\orb{1}(x),\ldots,\orb{n-1}(x)) 
\cong   \CWeights_\weyl. 
\end{align*}
Then the statement follows as in \Cref{lem: quotient size}. 
\end{proof}

The irreducible representations of $\mathfrak{S}_n$ are called \textbf{Specht modules} and are denoted by $\mathbb{S}^\lambda$ where $\lambda$ is a partition of $n$ (we refer to \cite{Sagan} for details).  

\begin{example} \label{example:A_2 additive and multiplicative basis}
The Weyl group $\weyl=\mathfrak{S}_3$ of $A_2$ has three irreducible representations $\mathbb{S}^\lambda$,
where $\lambda$ is a partition of $3$. 
Moreover the group algebra $\C[\weyl]$ has isotypic decomposition $1 \cdot \mathbb{S}^{(3)} \oplus 2 \cdot \mathbb{S}^{(2,1)} \oplus 1 \cdot \mathbb{S}^{(1,1,1)}. $ 
In \Cref{table:A_2 additive and multiplicative coinvariants}, 
we start with a symmetry adapted basis of the additive coinvariant space. 
The additive basis consists of higher Specht polynomials and was calculated with the M2 package SpechtModule  {\cite{SpechtModule}}. 
 
\begin{table}[h!]
    \centering
    \begin{tabular}{|c|c|c|c|}
        \hline
        & $\mathbb{S}^{(3)}$ & $\mathbb{S}^{(2,1)}$ & $\mathbb{S}^{(1,1,1)}$ \\
        \hline
        a & $1$ & {\tiny $\{Y_1-Y_2, \, Y_1-Y_3\}$} & {\tiny $(Y_1-Y_2)(Y_1-Y_3)(Y_2-Y_3)$} \\
             &     & {\tiny $\{(Y_1-Y_2)Y_3, \, (Y_1-Y_3)Y_2\}$} & \\
        \hline
        m & $1$ & {\tiny $\{f_{1,1} := x_1-x_2x_1^{-1}, \, f_{1,2} := x_1-x_2^{-1}\}$} & {\tiny $g:= x_1x_2-x_1^{-1}x_2^2+x_1^{-2}x_2$} \\
              &     & {\tiny $\{f_{2,1}:=x_1x_2^{-1}-x_1^{-1}, \, f_{2,2}:=x_2-x_1^{-1}\}$} & {\tiny $-x_1^2x_2^{-1}+x_1x_2^{-2}-x_1^{-1}x_2^{-1}$} \\
        \hline
    \end{tabular}
    \caption{
    In case of $\RootA[2]$, 
    the image of a symmetry adapted basis for $\SymWeights_\weyl$ 
    under the map $\Psi_\mathrm{A}$ from \Cref{cor:A_coinvariant_algebra_isomorphism} 
    is a symmetry adapted basis for $\CWeights_\weyl$.
    }
    \label{table:A_2 additive and multiplicative coinvariants}
\end{table}
\end{example}

A filtration of $\CWeights$ based on the Vorono\"i cell of the dual lattice 
was introduced in \cite{chromatic22,metzlaff2024onSymmetryAdaptedbases} 
with the purpose of optimization of trigonometric polynomials with crystallographic symmetry. 
In said article, a multiplicity pattern of the irreducible representations was computationally observed for $\RootA[2]$. 
The symmetry adapted basis from \Cref{example:A_2 additive and multiplicative basis} allows us to prove this observation. 

We equip the real vector space $\cartan = \spn_\R\Weights$ with an inner product
$\sprod{\cdot,\cdot}: \cartan \times \cartan \to \R$, 
so that $\weyl$ is orthogonal, that is, $\sprod{s(X),Y}=\sprod{X,s^{-1}(Y)}$ for all $X,Y\in\cartan$. 
The \textbf{coroot lattice} $\Corootlattice$ is the dual lattice of $\Weights$, that is, 
$\Corootlattice = \{\lambda \in \cartan\,\vert\,\forall \weight\in\Weights:\,\sprod{\lambda,\weight} \in \Z\}$. 

Let $\norm{\cdot} := \sqrt{\sprod{\cdot,\cdot}}$ be the induced norm. 
Then the \textbf{Vorono\"i cell} of $\Corootlattice$ is 
$\Vor(\Corootlattice) := \{X \in \cartan\,\vert\,\forall\,\lambda\in\Corootlattice:\,\norm{X} \leq \norm{\lambda - X} \}$. 

Since $\Weights$ is full-dimensional, so is $\Corootlattice$ and in particular $\Vor(\Corootlattice)$ is compact. 
Thus, for $d \in \N$, the subset $\Weights_d:= \Weights \cap (d\cdot \Vor(\Corootlattice))$ of the lattice is finite, 
where $d\cdot \Vor(\Corootlattice)$ denotes the dilation of $\Vor(\Corootlattice)$ by $d$, see \Cref{fig_voronoi_dilation}. 
We define $\CWeights_d$ as the finite-dimensional vector subspace of $\CWeights$ 
containing all Laurent polynomials with support in $\Weights_d$. 

By \cite[Ch. VI, \S 2, Prop. 4]{bourbaki456} and \cite[Ch. 21, \S 3, Thm. 5]{conway1988a}, 
we have $\Vor(\Corootlattice) = \weyl\fundom$, 
where $\fundom$ is the closure of an alcove of the affine Weyl group $\weyl \ltimes \Corootlattice$ so that $0\in\fundom$. 
In particular, the set $\weyl (\Weights \cap (d \cdot \fundom)) = \Weights \cap (d\cdot \Vor(\Corootlattice)) = \Weights_d$ is $\weyl$-stable
and thus $\weyl$ acts multiplicatively on $\CWeights_d$. 

\begin{figure}
\centering
\begin{tikzpicture}[scale=0.60]
\def\hexsize{1} 
\def\layers{4}  

\pgfmathsetmacro{\graystep}{80 / (\layers + 1)} 
\foreach \layer in {1,...,\layers} {
    \pgfmathsetmacro{\graylevel}{100 - \layer * \graystep}
    \fill[gray!\graylevel] (0:\layer * \hexsize)
    \foreach \i in {1,...,6} {
        -- (\i * 60:\layer * \hexsize)
    } -- cycle;
}

\foreach \layer in {0,...,\layers} {
    \ifnum\layer>0
        \draw[gray, thick] (0:\layer * \hexsize)
        \foreach \i in {1,...,6} {
            -- (\i * 60:\layer * \hexsize)
        } -- cycle;
    \fi
}

\foreach \layer in {0,...,\layers} {
    \foreach \i in {0,1,...,5} {
        \pgfmathsetmacro{\angle}{\i * 60}
        \foreach \j in {0,...,\layer} {
            \pgfmathsetmacro{\x}{(\layer - \j) * \hexsize * cos(\angle) + \j * \hexsize * cos(\angle + 60)}
            \pgfmathsetmacro{\y}{(\layer - \j) * \hexsize * sin(\angle) + \j * \hexsize * sin(\angle + 60)}
            \node[circle, fill=blue, inner sep=2pt] at (\x, \y) {};
        }
    }
}

\node[circle, fill=blue, inner sep=2pt] at (0, 0) {};

\end{tikzpicture}
\caption{
The subsets $\Weights_0 \subset \Weights_1 \subset \Weights_2 \subset \Weights_3 \subset \Weights_4$ 
of the hexagonal lattice in the plane, the weight lattice $\Weights$ of $\RootA[2]$.
}\label{fig_voronoi_dilation}
\end{figure}

\begin{theorem} \label{thm:isotypic decomposition wrt filtration} 
For $A_2$, the isotypic decomposition of $\CWeights_d$ is 
\[ \frac{(d+1)(d+2)}{2} \cdot \mathbb{S}^{(3)} \oplus d(d+1) \cdot \mathbb{S}^{(2,1)} \oplus \frac{(d-1)d}{2} \cdot \mathbb{S}^{(1,1,1)}.\]
\end{theorem}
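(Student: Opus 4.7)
The strategy is to realize $\CWeights_d$ as a permutation representation and to read off its isotypic decomposition from the $\weyl$-orbit structure of $\Weights_d$. The monomials $\{\mathfrak{e}^\mu : \mu \in \Weights_d\}$ form a $\weyl$-stable basis of $\CWeights_d$ that is permuted by $\weyl$, so
\[
\CWeights_d \;\cong\; \bigoplus_{\mu}\, \C[\weyl/\Stab_\weyl(\mu)]
\]
as $\weyl$-modules, where $\mu$ ranges over representatives of the finitely many $\weyl$-orbits in $\Weights_d$. Since $\weyl(d\fundom)=d\,\Vor(\Corootlattice)$, these representatives can be chosen inside $d\fundom\cap\Weights$, and for $\RootA[2]$ the closed fundamental alcove $\fundom$ is the triangle with vertices $0,\fweight{1},\fweight{2}$, so the representatives are exactly the $\binom{d+2}{2}$ lattice points $a\fweight{1}+b\fweight{2}$ with $a,b\in\N$ and $a+b\leq d$.

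Next I would classify these representatives by their $\weyl$-stabilizer, using the standard fact that the stabilizer in a reflection group of a point of the closed fundamental Weyl chamber is the parabolic subgroup generated by the simple reflections fixing it. This yields four types: the origin, with stabilizer $\weyl$ (one representative); the punctured Weyl walls $\{a\fweight{1}:1\le a\le d\}$ and $\{b\fweight{2}:1\le b\le d\}$, with stabilizers $\langle s_2\rangle$ and $\langle s_1\rangle$ respectively ($d$ representatives each); and the strict interior $\{a\fweight{1}+b\fweight{2}:a,b\ge 1,\, a+b\le d\}$ with trivial stabilizer, which contains $\tfrac{(d-1)d}{2}$ representatives.

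To conclude, I would substitute the known decompositions of the corresponding permutation modules -- the same multiplicities that are already visible in the symmetry-adapted coinvariant basis of \Cref{example:A_2 additive and multiplicative basis}: $\mathbb{S}^{(3)}$ for the trivial orbit, $\C[\mathfrak{S}_3/\mathfrak{S}_2]\cong\mathbb{S}^{(3)}\oplus\mathbb{S}^{(2,1)}$ on each wall orbit, and the regular representation $\C[\mathfrak{S}_3]\cong\mathbb{S}^{(3)}\oplus 2\,\mathbb{S}^{(2,1)}\oplus\mathbb{S}^{(1,1,1)}$ on each interior orbit. Multiplying these by the counts $1,\,d,\,d,\,\tfrac{(d-1)d}{2}$ and summing immediately produces the claimed multiplicities $\tfrac{(d+1)(d+2)}{2}$, $d(d+1)$, $\tfrac{(d-1)d}{2}$ of $\mathbb{S}^{(3)}$, $\mathbb{S}^{(2,1)}$, $\mathbb{S}^{(1,1,1)}$ respectively. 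The main point to be careful about is the \emph{affine} face $a+b=d$ of the dilated alcove: one must confirm it contributes no additional stabilizer, which holds because the reflection across it lies in the affine Weyl group $\weyl\ltimes\Corootlattice$ but not in the finite group $\weyl$, so every point on it with $a,b\geq 1$ is correctly counted as an interior orbit.
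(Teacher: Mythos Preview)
Your argument is correct and takes a genuinely different route from the paper's. You treat $\CWeights_d$ as a permutation module on $\Weights_d$, parametrize the $\weyl$-orbits by lattice points of the dilated alcove $d\fundom$, classify their stabilizers via the parabolic description, and sum the standard decompositions of $\C[\mathfrak{S}_3/\mathfrak{S}_3]$, $\C[\mathfrak{S}_3/\mathfrak{S}_2]$ and $\C[\mathfrak{S}_3]$. The paper instead exploits the tensor decomposition $\CWeights\cong\C[\theta_1,\theta_2]\otimes_\C\CWeights_\weyl$ together with the symmetry-adapted coinvariant basis $\{1,f_{1,1},f_{1,2},f_{2,1},f_{2,2},g\}$ from \Cref{table:A_2 additive and multiplicative coinvariants}: it verifies that $\theta_1^a\theta_2^b\cdot h$ lies in $\CWeights_{a+b+k}\setminus\CWeights_{a+b+k-1}$ with $k=0,1,2$ according to $h\in\{1\},\{f_{j,k}\},\{g\}$, and then counts the pairs $(a,b)$ with $a+b\leq d-k$. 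Your approach is more elementary and self-contained---it uses neither the invariant ring nor the coinvariant basis---while the paper's approach is tailored to demonstrate the computational payoff of the symmetry-adapted basis constructed earlier and makes the connection to the filtration from \cite{metzlaff2024onSymmetryAdaptedbases} explicit. Your remark that the affine wall $a+b=d$ contributes no extra stabilizer in the \emph{finite} Weyl group is exactly the point needed to justify that interior lattice points of $d\fundom$ with $a,b\geq 1$ really do have trivial stabilizer.
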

\begin{proof}
Note that if $\weight \in (d \cdot \Weights) \setminus ((d-1) \cdot \Weights)$, 
then so is $s(\weight)$ whenever $s \in \weyl$, because $\weyl$ is an isometry group. 

Using the symmetry adapted basis of the coinvariant space $\C[x^\pm]_{\weyl}$ from 
\Cref{example:A_2 additive and multiplicative basis}, we find 
\begin{align*}
    \CWeights_0 =& \langle 1 \rangle_\C = \mathbb{S}^{(3)}, \\
    \CWeights_1 =& \CWeights_0 \oplus \langle  \theta_1,\theta_2, 
    f_{1,1},f_{1,2},f_{2,1},f_{2,2} \rangle_\C  
     = 3\cdot\mathbb{S}^{(3)} \oplus 2\cdot \mathbb{S}^{(2,1)} \\
    \CWeights_2 =& \CWeights_1 \oplus  \langle \theta_i^2,\theta_1\theta_2,\theta_i f_{j,k}, g \mid 1 \leq i,j,k \leq 2\rangle_\C    =  6 \cdot \mathbb{S}^{(3)} \oplus 6 \cdot \mathbb{S}^{(2,1)} \oplus 1 \cdot \mathbb{S}^{(1,1,1)}. 
\end{align*}
For integers $a,b\in\N$, we have  
\begin{align*}
    \theta_1^a\theta_2^b \in \CWeights_{a+b}\setminus \CWeights_{a+b-1} , \\
    \theta_1^a\theta_2^b f_{j,k} \in \CWeights_{a+b+1} \setminus \CWeights_{a+b} , \\
    \theta_1^a\theta_2^b g \in \CWeights_{a+b+2} \setminus \CWeights_{a+b+1} ,
\end{align*} 
which can be seen from $\Weights_d + \Weights_{d'} \subset \Weights_{d+d'}$, 
see \cite[Lem.~2.1~(2)]{metzlaff2024onSymmetryAdaptedbases}, 
and by observing that $1$ is the coefficient of the four monomials 
\[
x_1^a\,x_2^b,\,x_1^{a+1}\,x_2^b,\,x_1^{a}\,x_2^{b+1},\,x_1^{a+1}\,x_2^{b+1}
\]
in the Laurent polynomials 
\[ 
\theta_1^a\,\theta_2^b,\,\theta_1^a\,\theta_2^b\,f_{1,1},\,\theta_1^a\,\theta_2^b\,f_{2,2},\,\theta_1^a\,\theta_2^b\,g . 
\]
Using the decomposition $\CWeights = \C[\theta_1,\theta_2] \otimes_\C \CWeights/(\theta_1,\theta_2)$, 
we derive the claimed multiplicities of $\mathbb{S}^{\lambda}$ in $\CWeights_d$, because 
\[ 
\nops{\{(a,b)\in \N^2 \mid a+b\leq d\} } =\sum\limits_{i=0}^d (d+1-i) = \frac{(d+1)(d+2)}{2} .
\]
\end{proof}

Finding the analogous isotypic decompositions for $\RootA$ seems very challenging. 
We have $\dim_{\C} \CWeights_1=2^{n}-1$ and, for $n=4$, we found 
\begin{equation}
\CWeights_1 = 4 \cdot \mathbb{S}^{(4)} \oplus 3 \cdot \mathbb{S}^{(3,1)} \oplus 1 \cdot \mathbb{S}^{(2,2)}. 
\end{equation}
We do not foresee a pattern for the multiplicities of the isotypic decomposition of $\CWeights_1$ for larger $n$. 

\subsection{Type $\RootC$}

The hyperoctahedral group $\mathfrak{S}_n \wr \{\pm 1\}$ acts on the affine algebraic varieties
\begin{equation}
    V
:=  \{Y=(Y_1,\ldots,Y_n)\in\C^n\} 
    \quad \mbox{and} \quad
    W
:=  \{y=(y_1,\ldots,y_n)\in(\C\setminus\{0\})^n\}
\end{equation} 
by permutation as well as sign-change and inversion of coordinates respectively. 
This defines a representation $\rho:\,\mathfrak{S}_n \wr \{\pm 1\} \to \mathrm{GL}_n(\C)$ with 
induced actions on the coordinate rings 
$\C[V] \cong \C[Y_1,\ldots,Y_n]$ and 
$\C[W] \cong \C[y_1^{\pm 1},\ldots,y_n^{\pm 1}]$. 

Recall from \cite[Pl. I]{bourbaki456} that $\weyl = \mathfrak{S}_n \wr \{\pm 1\}$ is the Weyl group of the root system $\RootC$. 
Denote by $\fweight{1},\ldots,\fweight{n}$ the fundamental weights of the weight lattice $\Weights$. 
This defines a representation $\pi:\,\mathfrak{S}_n \wr \{\pm 1\}\to \mathrm{GL}_{n}(\Z)$ 
with an additive action on the symmetric algebra $\SymWeights \cong \C[X_1,\ldots,X_n]$ 
and a multiplicative action on the group algebra $\CWeights \cong \C[x_{1}^{\pm 1},\ldots,x_{n}^{\pm 1}]$. 
Furthermore, we have
\begin{equation}
            \weyl(\fweight{1}) 
=           \{\fweight{1},\fweight{2}-\fweight{1},\ldots,\fweight{n}-\fweight{n-1}\} ,
\end{equation}
which leads to the following statement analogous to type $\RootA$ 
(see also \cite[\S 6]{metzlaff2024}). 

\begin{lemma}\label{lem: equiv_C}
The maps 
\[
    \C[V] \to \SymWeights,\,
    Y_1\mapsto X_1,\,
    Y_2\mapsto X_2-X_{1},\,
    \ldots,\,
    Y_{n}\mapsto X_{n} - X_{n-1}
\]
and
\[
    \C[W] \to \CWeights,\,
    y_1\mapsto x_1,\,
    y_2\mapsto x_2\,x_{1}^{-1},\,
    \ldots,\,
    y_{n}\mapsto x_{n}\, x_{n-1}^{-1}
\]
are well-defined $\rho$-$\pi$-equivariant algebra isomorphisms. 
\end{lemma}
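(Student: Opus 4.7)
The plan is to follow the proof of \Cref{lem: equiv_A} almost verbatim, with the additional task of handling the sign-change generators of $\weyl = \mathfrak{S}_n \wr \{\pm 1\}$. The key structural observation is that, with the convention $\fweight{0} := 0$, the identity $e_i = \fweight{i} - \fweight{i-1}$ expresses the change of basis of $\Weights$ from the standard basis $(e_1,\ldots,e_n)$ of $\cartan$ to the fundamental weight basis $(\fweight{1},\ldots,\fweight{n})$; by definition, $\pi$ is the conjugate of $\rho$ under this change of basis, and so the equivariance of the two displayed maps is essentially built in.

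For well-definedness and bijectivity, first observe that $V = \C^n$ and $W = (\C \setminus \{0\})^n$ carry no defining relations, so that $\C[V] = \C[Y_1,\ldots,Y_n]$ and $\C[W] = \C[y_1^{\pm 1},\ldots,y_n^{\pm 1}]$. The prescribed assignments therefore extend uniquely to algebra homomorphisms, since each target $x_i\,x_{i-1}^{-1}$ (with the convention $x_0:=1$) is a unit in $\CWeights$. The assignments $X_i \mapsto Y_1 + \ldots + Y_i$ and $x_i \mapsto y_1 \cdots y_i$ yield algebra homomorphisms in the opposite direction, and a check on generators shows that both compositions equal the identity.

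For equivariance, the plan is to verify the intertwining property on the simple reflections of $\RootC$, namely the adjacent transpositions $\tau_1,\ldots,\tau_{n-1}$ together with the sign change $\epsilon_n$ flipping the last coordinate. Starting from $\fweight{k} = e_1 + \ldots + e_k$ and applying $\tau_i$ and $\epsilon_n$ to the $e_j$, a direct computation gives $\pi(\tau_i)\,X_i = X_{i-1} - X_i + X_{i+1}$ with $\pi(\tau_i)\,X_k = X_k$ for $k \ne i$, and $\pi(\epsilon_n)\,X_k = X_k$ for $k < n$ with $\pi(\epsilon_n)\,X_n = 2\,X_{n-1} - X_n$. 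Comparing with $\rho(\tau_i)$, which swaps $Y_i$ and $Y_{i+1}$, and with $\rho(\epsilon_n)$, which negates $Y_n$, yields the equivariance in the additive case. The multiplicative case is the exponential analog, with sums replaced by products and sign changes replaced by inversions; the same two case distinctions suffice.

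The main obstacle is the bookkeeping for $\epsilon_n$, where the behavior of $X_k$ and $x_k$ changes at the boundary index $k = n$; but the calculation is entirely elementary once the change-of-basis viewpoint is in place, and no further input beyond \Cref{lem: equiv_A} is required.
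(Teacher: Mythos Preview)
Your proposal is correct and follows essentially the same approach as the paper: the paper omits a proof of \Cref{lem: equiv_C} entirely, pointing instead to the analogous \Cref{lem: equiv_A}, whose proof rests on the change-of-basis observation $Y_i=\fweight{i}-\fweight{i-1}$, $X_i=\fweight{i}$ (so that $\rho$ and $\pi$ are the same action in two bases and equivariance is tautological), together with the explicit inverses $X_i\mapsto Y_1+\ldots+Y_i$, $x_i\mapsto y_1\cdots y_i$. You reproduce exactly this argument and then add an explicit check on the simple reflections $\tau_i$ and $\epsilon_n$; this extra verification is redundant given the conceptual change-of-basis point you already made, but it is correct and does no harm.
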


Hence, the induced actions on $\C[V]$ and $\C[W]$ can be identified with the additive and multiplicative action respectively. 

\begin{example}
The Weyl group $\weyl$ of $\RootC[2]$ is generated by $s_1,s_2$ with
\[
    \rho(s_1) = \begin{pmatrix} 0&1\\1&0 \end{pmatrix},\,
    \rho(s_2) = \begin{pmatrix} 1&0\\0&-1 \end{pmatrix} 
\]
and
\[
    \pi (s_1) = \begin{pmatrix} -1&0\\1&1 \end{pmatrix},\,
    \pi (s_2) = \begin{pmatrix} 1&2\\0&-1 \end{pmatrix} .
\]
\end{example}

As in \Cref{lem: equiv_A}, 
we find a $\weyl$-module isomorphism between $\C[V]$ and $\C[W]$. 

\begin{proposition}\label{lem:C_isomorphism_symmetric_algebra_and_laurent_ring}
The map $\Psi_\mathrm{C} : \C[V] \to \C[W]$ with
\begin{align*}
Y_i^{2k} \mapsto y_i^k+y_i^{-k}, Y_i^{2k-1} \mapsto y_i^k-y_i^{-k}, \text{ for all } i \in \{1,\ldots,n\}, \,k \in \N,
\end{align*}
and $\prod_{i=1}^nY_i^{\alpha_i} \mapsto \prod_{i=1}^n\Psi_\mathrm{C}(Y_i^{\alpha_i})$ is a $\rho$-$\pi$-equivariant isomorphism. 
Moreover, the image of $\hilbertideal{a}$ is not a proper ideal in $\C[W]$. 
\end{proposition}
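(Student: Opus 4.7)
The plan is to split the proposition into three claims: (i) $\Psi$ is a well-defined vector space isomorphism; (ii) it is $\rho$-$\pi$-equivariant; (iii) the ideal generated by $\Psi(\hilbertideal{a})$ in $\C[W]$ equals $\C[W]$.

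For (i), I would exploit the tensor decompositions $\C[V] = \bigotimes_{i=1}^n \C[Y_i]$ and $\C[W] = \bigotimes_{i=1}^n \C[y_i^{\pm 1}]$. The defining formula shows that $\Psi = \bigotimes_i \psi_i$ factors through single-variable maps $\psi_i$. For each $k \geq 1$ the pair $\{y_i^k - y_i^{-k}, y_i^k + y_i^{-k}\}$ spans the same two-dimensional subspace as $\{y_i^{-k}, y_i^k\}$; together with $\psi_i(1) = 1$ the images therefore form a basis of $\C[y_i^{\pm 1}]$, so each $\psi_i$ and hence $\Psi$ is a vector space isomorphism. For (ii), permutation-equivariance is immediate from the symmetry of the formula in the index $i$; for a sign change $\epsilon_j$, the identities $\pi(\epsilon_j)(y_j^k + y_j^{-k}) = y_j^k + y_j^{-k}$ and $\pi(\epsilon_j)(y_j^k - y_j^{-k}) = -(y_j^k - y_j^{-k})$ match $\rho(\epsilon_j)(Y_j^{2k}) = Y_j^{2k}$ and $\rho(\epsilon_j)(Y_j^{2k-1}) = -Y_j^{2k-1}$, and the product-on-monomials definition of $\Psi$ propagates equivariance to arbitrary basis elements.

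For (iii), I would apply the Nullstellensatz on the algebraic torus $(\C^*)^n$: the ideal $\tilde{\mathcal{J}} := \langle \Psi(\hilbertideal{a})\rangle_{\C[W]}$ equals $\C[W]$ if and only if its common zero locus in $(\C^*)^n$ is empty. Assume for contradiction that $y \in (\C^*)^n$ lies in this zero locus. Since the elementary symmetric polynomial $e_n(Y^2) = Y_1^2 \cdots Y_n^2$ is a fundamental invariant of the hyperoctahedral group, it lies in $\hilbertideal{a}$, and so
\[
    \Psi(e_n(Y^2))(y) = \prod_{i=1}^n(y_i + y_i^{-1}) = 0,
\]
which forces the set $S := \{j : y_j^2 = -1\}$ to be nonempty. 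The polynomial multiple
\[
    m := e_n(Y^2) \cdot \prod_{i \in S} Y_i^2 = \prod_{i \in S} Y_i^4 \cdot \prod_{i \notin S} Y_i^2
\]
again belongs to $\hilbertideal{a}$, and its image factors coordinate-wise as
\[
    \Psi(m)(y) = \prod_{i \in S}(y_i^2 + y_i^{-2}) \cdot \prod_{i \notin S}(y_i + y_i^{-1}) = (-2)^{|S|} \cdot \prod_{i \notin S}(y_i + y_i^{-1}),
\]
which is nonzero since $|S| \geq 1$ and each remaining factor $y_i + y_i^{-1}$ is nonzero by the definition of $S$. This contradicts $\Psi(m)(y) = 0$.

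The delicate point to navigate is that $\Psi$ is only linear, not a ring homomorphism, so $\Psi(\hilbertideal{a})$ is merely a vector subspace and the relevant object is the ideal it generates in $\C[W]$. The key trick making (iii) work is to select elements of $\hilbertideal{a}$ that are pure monomial multiples of $e_n(Y^2)$: their $\Psi$-images factor coordinate-wise, reducing the evaluation at a hypothetical common zero to a small product of explicit scalars that is easy to keep nonzero.
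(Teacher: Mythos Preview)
Your argument is correct. Parts (i) and (ii) amount to the same verification the paper dispatches in one line (``by construction the map is well-defined, injective and $\rho$-$\pi$-equivariant''), with surjectivity following from the same observation that $y_i^{\pm k}$ is a linear combination of $\Psi(Y_i^{2k})$ and $\Psi(Y_i^{2k-1})$.

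For (iii) you take a genuinely different route. The paper produces an explicit algebraic witness: from the three Hilbert-ideal elements $\sum_i Y_i^2$, $\sum_i Y_i^4$, $\sum_{i<j} Y_i^2 Y_j^2$ it computes
\[
\Bigl(\Psi\bigl(\textstyle\sum_i Y_i^2\bigr)\Bigr)^2 - \Psi\bigl(\textstyle\sum_i Y_i^4\bigr) - 2\,\Psi\bigl(\textstyle\sum_{i<j} Y_i^2 Y_j^2\bigr) = 2n,
\]
so any ideal containing $\Psi(\hilbertideal{a})$ already contains a unit. Your approach is geometric: you invoke the Nullstellensatz on the torus, then feed the hypothetical common zero two carefully chosen \emph{monomial} elements of $\hilbertideal{a}$ whose $\Psi$-images factor coordinate-wise. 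The paper's method is more elementary (no Nullstellensatz, and it hands you an explicit certificate that $1$ lies in the ideal), while yours exploits more transparently the one structural feature $\Psi$ does preserve---products of single-variable powers---and would adapt readily if one wanted to vary the generating set of $\hilbertideal{a}$. Both reach the same conclusion that the ideal generated by $\Psi(\hilbertideal{a})$ is all of $\C[W]$, which is the substantive content behind ``not a proper ideal''.
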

\begin{proof}
By construction the map is well-defined, injective and $\rho$-$\pi$-equivariant.
The map is also surjective since 
\[
    y_i^{\pm k} 
=   \frac{y_i^k+y_i^{-k}}{2} \pm \frac{y_i^k-y_i^{-k}}{2} = \frac{\Psi_\mathrm{C}(Y_i^{2k})\pm \Psi_\mathrm{C}(Y_i^{2k-1})}{2} 
\in \Ime (\Psi_\mathrm{C}). 
\]
Assume that $\hilbertideal{a}$ is mapped to an ideal $I\subseteq \C[W]$, that is, 
\[
    \Psi_\mathrm{C}\left(\sum\limits_{i=1}^n Y_i^2\right), 
    \Psi_\mathrm{C}\left(\sum\limits_{i=1}^n Y_i^4\right) \mbox{ and }
    \Psi_\mathrm{C}\left(\sum\limits_{1\leq i < j \leq n} Y_i^2\,Y_j^2\right) 
\in I.
\] 
Since $I$ is closed under multiplication and addition, this would imply
\[
    2\,n
=   \left(\Psi_\mathrm{C}\left(\sum\limits_{i=1}^n Y_i^2\right)\right)^2
-   \Psi_\mathrm{C}\left(\sum\limits_{i=1}^n Y_i^4\right)
-   2\,\Psi_\mathrm{C}\left(\sum\limits_{1\leq i < j \leq n} Y_i^2\,Y_j^2\right)
\in I,
\]
that is, $I = \C[W]$. 
\end{proof}

In particular, the image of the additive Hilbert ideal is not the multiplicative Hilbert ideal.
Thus the map $\Psi_\mathrm{C}$ does not extend to a $\weyl$-homomorphism of the coinvariant spaces.

\begin{example} \label{example:C_2 additive coinvariant basis goes to multiplicative one}
The additive coinvariants in \Cref{table:C_2 additive coinvariants} are mapped to 
linearly independent equivariants in $\C[W]$ by \Cref{lem:C_isomorphism_symmetric_algebra_and_laurent_ring}.
The irreducible representations of $\mathfrak{S}_n \wr \{\pm 1\}$ are also called Specht modules 
and are indexed by pairs of partitions $(\lambda,\mu)$ where the sizes of $\lambda$ and $\mu$ sum to $n$ 
(see for example  {\cite{musili1993representations}}). 
\begin{table}[h!] 
    \centering
    \renewcommand{\arraystretch}{1.5}
    \begin{tabular}{|c|c|}
        \hline
        \textbf{Irreducible Representation} & \textbf{Multiplicative Equivariants} \\
        \hline
        $ \mathbb{S}^{((2),(0))} $ &  $ 1 $ \\
        \hline
        $ \mathbb{S}^{((1),(1))} $ &  $ y_1-y_1^{-1}, y_2-y_2^{-1} $ \\
                        &    $ (y_2+y_2^{-1})\cdot (y_1-y_1^{-1}) , \, (y_1+y_1^{-1})\cdot (y_2-y_2^{-1}) $ \\
        \hline
        $ \mathbb{S}^{((0),(2))} $ &  $ (y_1-y_1^{-1})\cdot (y_2-y_2^{-1}) $ \\
        \hline
        $ \mathbb{S}^{((1,1),(0))} $ &  $ (y_1+y_1^{-1})- (y_2+y_2^{-1}) $ \\
        \hline
        $ \mathbb{S}^{((0),(1,1))} $ & $ (y_1^2-y_1^{-2})(y_2-y_2^{-1})-(y_1-y_1^{-1})(y_2^2-y_2^{-2}) $ \\
        \hline
    \end{tabular} 
    \caption{
    In case of $\RootC[2]$, 
    the image of a symmetry adapted basis for $\SymWeights_\weyl$ 
    under the map $\Psi_\mathrm{C}$ from \Cref{lem:C_isomorphism_symmetric_algebra_and_laurent_ring} 
    is a symmetry adapted basis for $\CWeights_\weyl$.
    Compare this with the output of \Cref{alg_CoinvariantBasisTransfer} in \Cref{table:C_2 additive coinvariants}. 
    }\label{Table:C_2 image of higher Specht basis}
\end{table}
\end{example}

Since the action of $\weyl$ in the standard basis $y$ is more intuitive than in the action in the basis $x$ of fundamental weights, it might be simpler to relate the additive coinvariant basis consisting of higher Specht polynomials to a symmetry adapted basis for $\C[y^{\pm}]/(\sigma_n(y+y^{-1}))$, where $y+y^{-1}:=(y_1+y_1^{-1},\ldots,y_n+y_n^{-1})$. 
The isomorphisms in \Cref{lem: equiv_C} allow us to consider the multiplicative coinvariant basis in a more intuitive way. 
The invariant ring 
$\C[y_1^{\pm 1},\ldots,y_n^{\pm 1}]^{\rho(\mathfrak{S}_n\wr\{\pm1\})}$ 
is, as an algebra, generated by 
\begin{equation}
    \sigma_i(y+y^{-1}) 
=   \sum_{\substack{J \subseteq \{1 ,\ldots, n\} \\ 
    \mbox{with } \vert J \vert = i }} \, 
    \prod_{j\in J} y_j+y_j^{-1},
\end{equation}
that is, $\C[W]^\weyl \cong \C[\sigma_1(y+y^{-1}),\ldots,\sigma_{n}(y+y^{-1})]$, see \cite[Ex. 3.5.4]{lorenz06}. 

Again, the multiplicative invariant ring 
$\C[x_{1}^{\pm 1},\ldots,x_{n}^{\pm 1}]^{\pi(\mathfrak{S}_n \wr \{\pm 1\})}$ 
is generated by orbit polynomials $\orb{i}$. 
Recall from \Cref{lem: equiv_C} that
\[
    \Gamma : \C[W] \to \CWeights,\,  y_1\mapsto x_1,y_2\mapsto x_2\,x_{1}^{-1},\ldots,y_{n}\mapsto x_{n}\, x_{n-1}^{-1} \label{eq:map2}
\]
is an equivariant algebra isomorphism. 

\begin{proposition}\label{cor:Type C isomorphism of coinvariant spaces}
The map 
\[
\C[y^{\pm}]/(\sigma_i(y+y^{-1})\,\vert\,1\leq i\leq n)  \to \C[x^\pm]/(\orb{i}(x)\,\vert\,1\leq i\leq n)
\]
that takes $h(y)$ to $\Gamma (h(y))$ is a $\weyl$-module isomorphism.
\end{proposition}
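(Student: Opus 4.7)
The plan is to prove this in direct analogy with Proposition \ref{cor:A_coinvariant_algebra_isomorphism} for type $\RootA$. By \Cref{lem: equiv_C}, the map $\Gamma$ is already a $\rho$-$\pi$-equivariant algebra isomorphism from $\C[W]$ onto $\CWeights$, so it automatically sends $\C[W]^\weyl$ onto $\CWeights^\weyl$. What needs to be established is that $\Gamma$ maps the specific ideal $(\sigma_i(y+y^{-1})\,\vert\,1\leq i\leq n)$ onto $(\orb{i}(x)\,\vert\,1\leq i\leq n)$, so that it descends to a well-defined, bijective, $\weyl$-equivariant map of the quotients.

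The main computational step is to show that for each $i$, $\Gamma(\sigma_i(y+y^{-1}))$ equals $\orb{i}(x)$ up to a nonzero scalar. This amounts to identifying
\[
    \sigma_i(y+y^{-1})
=   \sum\limits_{\substack{J\subseteq\{1,\ldots,n\} \\ \vert J\vert = i}} \prod\limits_{j\in J}(y_j + y_j^{-1})
\]
with the orbit sum $\sum_{\mu\in\weyl(\fweight{i})} \mathfrak{e}^\mu$, expressed in the standard basis $y_j = \mathfrak{e}^{e_j}$. Indeed, for $\RootC$, the orbit $\weyl(\fweight{i})$ consists exactly of the vectors $\pm e_{j_1} \pm \ldots \pm e_{j_i}$ for all size-$i$ subsets and sign choices, so the expansion of $\sigma_i(y+y^{-1})$ enumerates these one-to-one. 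Since $\Gamma$ is just a linear change of coordinates on the lattice, $\Gamma\bigl(\sum_{\mu\in\weyl(\fweight{i})}\mathfrak{e}^\mu\bigr)$ is the same orbit sum written in fundamental-weight coordinates, which differs from $\orb{i}(x) = \frac{1}{\nops{\weyl}}\sum_{s\in\weyl}\mathfrak{e}^{s(\fweight{i})}$ only by the factor $\nops{\weyl(\fweight{i})}$. This scalar is nonzero, so the generators of the two ideals correspond up to units.

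Granted this, the map $\Gamma$ descends to a well-defined, surjective algebra homomorphism of quotients. Bijectivity then follows by a dimension count as in \Cref{lem: quotient size}, since both quotients have $\C$-dimension $\nops{\weyl}$ by \Cref{thm:Bourbaki} applied on each side (via the generating sets $\sigma_i(y+y^{-1})$ and $\orb{i}(x)$ of the respective polynomial invariant rings). Equivariance of the induced isomorphism is immediate from the equivariance of $\Gamma$ together with the $\weyl$-stability of both ideals.

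The only real obstacle is the first step: carefully matching $\Gamma(\sigma_i(y+y^{-1}))$ with a nonzero scalar multiple of $\orb{i}(x)$, which amounts to combinatorics of the $\RootC$ root system and a verification that the stabilizer-size normalization does not introduce any cancellation. This parallels the use of \cite[Prop.~5.1]{metzlaff2024} in the $\RootA$ case, and the analogue for type $\RootC$ should be equally direct, given the explicit description of the orbits of fundamental weights.
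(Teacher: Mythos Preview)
Your proposal is correct and follows essentially the same approach as the paper. The paper cites \cite[Prop.~6.1]{metzlaff2024} for the identity $\Gamma(\sigma_i(y+y^{-1}))=2^i\binom{n}{i}\,\orb{i}(x)$ and then defers to the argument of \Cref{cor:A_coinvariant_algebra_isomorphism}; you instead unpack that cited identity directly by observing that the $\RootC$-orbit of $\fweight{i}$ is $\{\pm e_{j_1}\pm\cdots\pm e_{j_i}\}$ and that expanding $\sigma_i(y+y^{-1})$ enumerates exactly these terms, which is precisely the content of the external reference (and your factor $\nops{\weyl(\fweight{i})}$ is indeed $2^i\binom{n}{i}$).
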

\begin{proof}
Since $\Gamma (\sigma_i(y+y^{-1}))=2^i{n \choose i}\theta_i(x)$ by \cite[Prop.~6.1]{metzlaff2024}, this follows analogously to \Cref{cor:A_coinvariant_algebra_isomorphism}. 
\end{proof}

%%%%%%%%%%%

\section{Conclusion and Open Questions}

Given a Weyl group $\weyl$ of a crystallographic root system in a real vector space with weight lattice $\Weights$, 
we show that the multiplicative coinvariant space $\CWeights_\weyl$ affords the regular representation of $\weyl$, 
introduce a space of multiplicative $\weyl$-harmonics that also affords the regular representation, 
and draw analogies to the additive coinvariant space $\SymWeights_\weyl$. 

Using the technique of graded transfer, 
an algorithm is designed to transform 
an additive coinvariant basis 
into a multiplicative one. 
Therefore, the two coinvariant spaces and in particular the algebras 
$\CWeights$ and $\SymWeights$ are isomorphic $\weyl$-modules. 

Computationally, the construction of explicit $\weyl$-homomorphisms proved challenging. 
Such homomorphisms were only found for types $\RootA$ (\Cref{lem: equiv_A}) and partly $\RootC$ (\Cref{lem: equiv_C}). 

\begin{question}\label{question1}
Are there explicit $\weyl$-homomorphism between 
$\CWeights$ and $\SymWeights$ for the types
$\RootB,\,\RootD,\,\RootE[6],\,\RootE[7],\,\RootE[8]$ or $\RootF[4]$?\end{question}

For type $\RootC[2]$, 
higher Specht polynomials form a symmetry adapted basis for $\SymWeights_\weyl$. 
We have two strategies to construct one for $\CWeights_\weyl$. 
First, using \Cref{alg_CoinvariantBasisTransfer}, 
we compute a symmetry adapted basis for 
$\CWeights_\weyl$ in \Cref{table:C_2 additive coinvariants}. 
Second, applying \Cref{lem:C_isomorphism_symmetric_algebra_and_laurent_ring} 
yields an equivariant isomorphism between $\SymWeights$ and $\CWeights$, 
which does NOT induce an isomorphism of the coinvariant spaces. 
Nonetheless, the resulting polynomials in \Cref{Table:C_2 image of higher Specht basis} 
form a symmetry adapted basis for $\CWeights_\weyl$, 
which differs from the one in \Cref{table:C_2 additive coinvariants}. 

\begin{question}
For general type $\RootC$,
is the image of a higher Specht polynomial basis for $\SymWeights_\weyl$ 
under the map $\Psi_\mathrm{C}$ 
always a (symmetry adapted) basis 
for the multiplicative coinvariant space $\CWeights_\weyl$?
\end{question}

Expanding on type $\RootC$, 
the multiplicative coinvariant space is 
$\CWeights_\weyl\cong\C[y^{\pm}]/(\sigma_i(y+y^{-1}))$ 
(see \Cref{cor:Type C isomorphism of coinvariant spaces}) 
on which the group $\weyl\cong\mathfrak{S}_n \wr \{\pm 1\}$ acts intuitively 
by permutation and inversion of variables. 
We could not determine a closed form expression for a Gr{\"o}bner basis for the ideal 
$\hilbertideal{m}= (\sigma_i(y+y^{-1}))$ for arbitrary $n \in \N$, 
although the reduced Gr{\"o}bner basis with respect to the lexicographical ordering of 
the ideal $\hilbertideal{a}=(\sigma_i(Y))$ is known, see \cite{Mora_Sala_Gröbner_basis}. 

\begin{question}
For type $\RootC$, what is the Gr{\"o}bner basis of the ideal $\hilbertideal{m}= (\sigma_i(y+y^{-1}))$ with respect to the lexicographical ordering?
Does the choice of monomial ordering matter in this context?
\end{question}

Analogous questions can be asked for other types next to $\RootC$, 
but we think that this would be the most accessible one, 
due to the relations in \cite[\S6]{metzlaff2024} that turn out to be more complicated for other types. 

In \Cref{thm:isotypic decomposition wrt filtration}, 
we give a formula for the multiplicities of the irreducible representations in $\CWeights_d$ for $\RootA[2]$, 
where the degree $d$ is defined via dilation of the Vorono{\"i} cell. 

\begin{question}
Are there explicit formulae for the multiplicities of the irreducible representations in the isotypic decomposition of $\CWeights_d$ for type $\RootA$, $n\geq 4$, or for the other types $\RootB,\,\RootC,\,\RootD,\,\RootE[6],\,\RootE[7],\,\RootE[8],\,\RootF[4]$? 
\end{question}

Theoretical generalizations beyond Weyl groups are foreseen to be possible under the following considerations. 
The setup in \Cref{sec_associated_graded_equivariants} only involves general finite groups that leave a lattice stable. 
In order to prove \Cref{prop_GradedCoinvariantIso}, 
we require the additive and multiplicative Hilbert ideal 
to be generated by invariants in the respective kernels 
$\augmentationideal{a}$ and $\augmentationideal{m}$. 
The proof of \Cref{thm_CoinvariantTranserAdditiveMultiplicative} relies on the fact 
that the coinvariant space affords the regular representation. 
For additive actions, 
this property characterizes complex reflection groups \cite{chevalley1955}. 
One should consider the associated complex root lattices in \cite{Nebe99}. 

Finally, a thrilling direction for future research lies in an extension of the study of multiplicative coinvariant spaces for diagonal actions, where one considers the action on $\C[\Weights\oplus\Weights^\vee]$. 
In the additive settings, foundational work on diagonal invariants and coinvariant spaces was developed by Haiman \cite{haiman1994} and Gordon \cite{gordon2003}. It would also be very interesting to explore the setting of diagonal actions with commuting and anticommuting variables as for instance studied in the additive setting in \cite{swanson}. 

\section*{Acknowledgements}

We are grateful to the anonymous referee for 
the insightful comments and constructive feedback. 
We thank 
Evelyne Hubert, Cordian Riener, Ulrich Thiel and Kirill Zaynullin 
for fruitful discussions and suggestions. 
The work of Tobias Metzlaff has been supported by the project 
``Symbolic Tools in Mathematics and their Application'' 
of the German research foundation (SFB-TRR 195) 
and the Rheinland-Pfalz research initiative (SymbTools). 

\bibliographystyle{alpha}
\bibliography{library.bib}

\end{document}